\newlength\shlength
\newcommand\xshlongvec[2][0]{\setlength\shlength{#1pt}%
  \stackengine{-5.6pt}{$#2$}{\smash{$\kern\shlength%
    \stackengine{7.55pt}{$\mathchar"017E$}%
      {\rule{\widthof{$#2$}}{.57pt}\kern.4pt}{O}{r}{F}{F}{L}\kern-\shlength$}}%
      {O}{c}{F}{T}{S}}
\theoremstyle{definition}
\newtheorem{theorem}{Theorem}[section]
\newtheorem{lemma}[theorem]{Lemma}
\newtheorem{proposition}[theorem]{Proposition}
\newtheorem{definition}[theorem]{Definition}
\newtheorem{corollary}[theorem]{Corollary}
\newtheorem{example}[theorem]{Example}
\newcommand{\Q}{{\mathbb{Q}}}
\newcommand{\N}{{\mathbb{N}}}
\title{Generalized cell structures}
\author{Ana G.~Hern\'andez-D\'avila}
\author{Benjam\'in A.~Itz\'a-Ortiz} 
\address{Centro de Investigaci\'on en Matem\'aticas, Universidad Aut\'onoma del Estado de Hidalgo, Pachuca, Hidalgo, Mexico}
\email{itza@uaeh.edu.mx,anagabriela169@hotmail.com}
\author{Roc\'io Leonel-G\'omez}
\address{Facultad de Ciencias, Universidad Nacional Aut\'onoma de M\'exico, Mexico City, Mexico}
\email{rocioleonel@gmail.com}
\begin{document}

\begin{abstract} 
 Cell structures were introduced by  W.~Debski and E.~Tymchatyn as a way to study some classes of topological spaces and their continuous functions by means of discrete approximations. In this work we weaken  the notion of cell structure and prove that the resulting class of topological space admitting such a generalized cell structure  includes non-regular spaces.
\end{abstract}
\maketitle

\section*{Introduction}

Cell structures may be thought of as  devices to represent some topological spaces  by means of discrete approximations, more precisely, to describe a space $X$ as homeomorphic to a perfect image of an inverse limit of graphs. They were introduced in \cite{DT} for complete metric spaces and extended in \cite{DT2018} to topologically complete spaces.  The initial step in identifying a cell structure consists in considering an inverse sequence of  graphs, each of which have attached a  reflexive and symmetric relation.
The vertices of each graph are the elements of a  set $G$ and the edges are described by reflexive and symmetric subsets of $G\times G$, or equivalently, the set of edges of $G$ is an entourage of the diagonal of $G$. 
Such an inverse sequence is said to be a cell structure when it admits a couple of properties: one that allows the induced natural relation on its inverse limit to be an equivalence relation (where two threads in the inverse limit are declared to be related if  every pair of their corresponding components are related), and a second property that allows the quotient of the inverse limit by its natural relation to be a perfect mapping. This resulting quotient space is the said to admit, up to homeomorphism, a cell structure.

In this paper we explore the natural question on the  class of topological space obtained by admitting a weaker version of a cell structures, which we will call generalized cell structures, or g-cell structure for short. We are able to show that there are topological spaces admitting such a g-cell structure but not a cell structure. Furthermore, an explicit  example of a nonregular space admitting a g-cell structure will be provided.

Representing the structure of spaces as approximations of simpler more understandable structures,  such as graphs, is a fruitful idea used not only in topology \cite{L}, but in a variety of other areas such as spin networks  \cite{nash:1999}, operator algebras \cite{elliott}, networks \cite{newman}, among others.

We divide this work in two sections. In Section~1 we introduce the notion of generalized cell structure and prove the basic results needed in the rest of the paper. The main results are given in Section~2.

The authors gratefully acknowledge that this paper have benefited from stimulating conversations with Carlos Islas and Juan Manuel Burgos. The first author received support for this work from CONACyT scholarship Num.~926215.

\section{Preliminaries}

In this section we give the definition of spaces admitting a g-cell structure together with some preliminary results which will be useful in the next section. For any set $G$, we will denote the diagonal of $G$ as $\Delta_G=\{(x,x)\colon x\in G\}$. When no confusion arises, we will write $\Delta$ rather than $\Delta_G$. The set of natural numbers is as usual $\N=\{ 1,2,3,\ldots\}$.

\begin{definition}
We say that an order pair $(G,r)$ is a cellular graph if $G$ is a non\-empty topological space and $r\subset G\times G$ is a reflexive and symmetric relation on $G$. The vertices of the graph, the elements of $G$,  will also be known as cells of the graph, while  the elements of $r$ are the edges of the graph. 
\end{definition}


\begin{definition}
  If $(G,r)$ is a cellular graph and $u\in G$ is a cell, we define the neighborhood of $u$ to be  the set
 \begin{equation*}
     B(u,r)=\{v\in G\colon (u,v)\in r\}.
 \end{equation*}
 Therefore, the set $B(u,r)$ is the set of vertices of $G$ that are adjacent to $u$ in $G$.
 More generally, for $A\subset G$ we denote
 \begin{equation*}
     B(A,r)=\bigcup_{a\in A} B(a,r).
 \end{equation*}
 \end{definition}

Following the terminology of \cite[Section~8.1, pg.~426]{E}, given a cellular graph $(G,r)$, since the relation $r$ is reflexive, then the diagonal $\Delta$ of $G$ is a subset of $r$. On the other hand, since $r$ is symmetric it follows that $r$ is equal to its own inverse relation $-r$, where $-r=\{(x,y)\colon (y,x)\in r\}$. 
Conversely, given a  entourage of the diagonal $r$ of $G$, that is to say, a relation on $G$ which contains $\Delta$ and satisfies $r=-r$, it follows that $r$ is a reflexive and symmetric relation. Thus, we may characterize a cellular graph as a pair $(G,r)$ where $G$ is a nonempty topological space and $r$ is an entourage of the diagonal.
Finally, the composition of $r+r=2r$ of the relation $r$ with itself is defined as $2r=\{(x,z)\colon\exists y\in G,\,\, (x,y),(y,z)\in r  \}$. In other words a pair of cells $(x,z)$ belongs to $2r$ if there exits a path of length 2 in the cellular graph $(G,r)$ joining them. Notice that $2r$ is still a reflexive and symmetric relation of $G$. Furthermore, it is always true that $B(x,r)\subset B(x,2r)$ and the opposite inclusion follows if  and only if $r$ is transitive.

\begin{definition}
  Suppose that $\{(G_n,r_n)\}$ is a sequence of cellular graphs and let $\{g_n^{n+1} \}_{n\in\N}$ be a family of continuous functions, called bonding maps,  $g_n^{n+1}\colon G_{n+1}\to G_n$ such that
  \begin{itemize}
  \item $g_n^n$ is the identity on $G_n$,
  \item $g_n^l=g_n^m\circ g_m^l$ for $n<m<l$ and
  \item the bonding maps send edges to edges, that is, $(g_n^{n+1}(x),g_n^{n+1}(y))\in r_{n}$ whenever $(x,y)\in r_{n+1}$.
  \end{itemize}
  We will then say that $\left\{\left(G_n,r_n\right),g_{n}^{n+1}\right\}$ is an inverse sequence of cellular graphs.
\end{definition}

  The inverse limit of an inverse sequence of cellular graphs $\left\{\left(G_n,r_n\right),g_{n}^{n+1}\right\}$ will be denoted by $G\sb\infty$ or $\displaystyle\lim_{\longleftarrow} \{G_i,g_{i}^{i+1}\}$. It is a nonempty subspace of $\prod_{n\in\N}G_n$ with the product topology.  Elements in $G\sb\infty$ are called threads and the mappings $g_i\colon G\sb\infty\to G_i$ denote the restriction of the standard projection maps $p_i\colon \prod_{n\in\N} G_n\to G_i$.  In other words,
  
 \begin{equation*}
     G\sb\infty=\left\{\bar{x}=(x_n)_{n\in\N}\colon \forall j\geq i, \,\,
                      g_i^j(x_j)=x_i \right\}\subset\prod_{n\in\N}G_n.
 \end{equation*}

The topology in $G\sb\infty$ is actually characterized by a basis, as the following result shows.
It is actually a special  case of \cite[Propositon~2.5.5]{E}. We include a proof for completeness.

\begin{proposition}\label{basis}
  Let $G\sb\infty$ be the inverse limit of an inverse sequence of cellular graphs $\left\{\left(G_n,r_n\right),g_{n}^{n+1}\right\}$. Then the collection of the sets of the form 
  \begin{equation*} \left\{ g_{i}^{-1}\left(A_{i}\right) \colon
  i\in\N \text{ and }
  A_{i}\subset G_{i} \text{ is an open in the base of } G_i 
  \right\}
  \end{equation*}
  defines a basis for the the topology of $G\sb\infty$.
\end{proposition}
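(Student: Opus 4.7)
The plan is to verify the two conditions that define a basis for $G_\infty$: first, that every set of the stated form is open in $G_\infty$; second, that for every open set $V\subset G_\infty$ and every $\bar x\in V$ there is some $g_i^{-1}(A_i)$ in the collection with $\bar x\in g_i^{-1}(A_i)\subset V$. The first condition is essentially bookkeeping, since each $g_i\colon G_\infty\to G_i$ is the restriction of the product projection $p_i$, which is continuous, so preimages of (basic) open sets are open.

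For the second condition I would start from the subspace topology description: $V=G_\infty\cap W$ for some $W$ open in $\prod_{n\in\N} G_n$. A standard basis for the product topology consists of boxes $\prod_{n\in\N} U_n$ in which $U_n\subset G_n$ is a basic open for $n$ in some finite set $F\subset\N$ and $U_n=G_n$ otherwise. So I would pick such a box with $\bar x\in\prod_{n}U_n\subset W$, and set $k=\max F$.

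The key observation, which collapses the finitely many coordinate conditions into a single one, is that for any thread $\bar y\in G_\infty$ one has $y_i=g_i^k(y_k)$ for every $i\le k$. Consequently, the condition $\bar y\in\prod_n U_n$ reduces to $y_k\in U_k\cap \bigcap_{i\in F,\,i<k}(g_i^k)^{-1}(U_i)$. This intersection is open in $G_k$ by continuity of the bonding maps and contains $x_k$; hence it contains some basic open $A_k$ of $G_k$ with $x_k\in A_k$. Translating back, $g_k^{-1}(A_k)$ lies in the stated collection, contains $\bar x$, and is contained in $G_\infty\cap\prod_n U_n\subset V$, completing the verification.

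The step I expect to be the main (and really the only) conceptual obstacle is the compression of a finite cylinder condition into a single-coordinate condition via the identities $y_i=g_i^k(y_k)$; once this trick is recognized the rest is routine. Everything else is continuity of the bonding maps, elementary manipulation of basic opens in the product topology, and the definition of the subspace topology on $G_\infty\subset\prod_{n\in\N} G_n$.
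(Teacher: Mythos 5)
Your argument is correct and follows essentially the same route as the paper's proof: reduce to a finite cylinder from the product topology, pull the finitely many coordinate conditions back to the maximal index $k$ via the continuous bonding maps $(g_i^k)^{-1}(U_i)$ (using $y_i=g_i^k(y_k)$ for threads), pick a basic open $A_k\subset G_k$ around $x_k$ inside that intersection, and conclude with $g_k^{-1}(A_k)$. The only difference is cosmetic (you also note explicitly that each $g_i^{-1}(A_i)$ is open, which the paper leaves implicit), so no further changes are needed.
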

\begin{proof}
Let $\bar{x}\in G_{\infty}$ and let $U\subset G_{\infty}$ be an open set such that $\bar{x}\in U$. By definition of the subspace topology, there exists an open set $V\subset \prod_{n\in\N} G_{i}$ such that $U=V\cap G_{\infty}$. Thus there exists an open set $V_{0}=p_{j_{1}}^{-1}(U_{1})\cap \cdots \cap p_{j_{n}}^{-1}(U_{n})$ in the base of $\prod_{n\in\N} G_{i}$ satisfying $\bar{x}\in V_{0}\subset V$. Let $j=\max\{j_{i}\}$. Since the bounding maps are continuous, for every $i\in \{1,...,n\}$, $\bigl(g_{j_{i}}^{j}\bigr)^{-1}(U_{i})$ is an open set in $G_{j}$. Hence, $\bigcap \bigl(g_{j_{i}}^{j}\bigr)^{-1}(U_{i})$ is also an open set and $\bar{x}\in \bigcap \bigl(g_{j_{i}}^{j}\bigr)^{-1}(U_{i})$. Let $V_{j}$ be an open set in the base of $G_{j}$ such that $x_{j}\in V_{j}\subset \bigcap \bigl(g_{j_{i}}^{j}\bigr)^{-1}(U_{i})$. Now, since $g_{j}^{-1}\bigl(\bigl(g_{j_{i}}^{j}\bigr)^{-1}(U_{i})\bigr)=g_{j_{i}}^{-1}(U_{i})=G_{\infty}\cap p_{j_{i}}^{-1}(U_{i})$, we get that $\bar{x}\in g_{j}^{-1}(V_{j})\subset G_{\infty}\cap p_{j_{i}}^{-1}(U_{i})\subset G_{\infty}\cap V=U$.
\end{proof}

\begin{definition}\label{natrel}
Let $\left\{\left(G_n,r_n\right),g_{n}^{n+1}\right\}$ be an inverse sequence of cellular graphs and let $G\sb\infty$ be its inverse limit. We define the  natural relation $r$ on $G\sb\infty$ by 
\begin{equation*}
         r=\{(\bar{x},\bar{y})\in G\sb\infty\times G\sb\infty
             \colon\forall n,\,\, (x_n,y_n)\in r_n\}.
    \end{equation*}
\end{definition}

In turns out that the inverse limits of an inverse sequence of cellular graphs is actually a cellular graph with the natural relation $r$ on $G\sb\infty$ given in Definition~\ref{natrel}, as established in the following proposition.

\begin{proposition}\label{relation}
     If $G\sb\infty$ is the inverse limit of an inverse sequence of cellular graphs then 
    it is a cellular graph with respect its natural relation given in Definition~\ref{natrel}. 
\end{proposition}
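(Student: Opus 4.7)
The plan is to verify directly the three ingredients demanded by the definition of a cellular graph: that $G\sb\infty$ is a nonempty topological space, and that the natural relation $r$ of Definition~\ref{natrel} is both reflexive and symmetric on $G\sb\infty$.

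First I would appeal to what has already been recorded in the paragraph preceding the proposition: $G\sb\infty$ is a nonempty subspace of $\prod_{n\in\N}G_n$, and thus inherits a topology from the product topology, so the underlying set-theoretic/topological requirement is immediate. Consequently the only thing left to establish is that the natural relation $r\subset G\sb\infty\times G\sb\infty$ is an entourage of the diagonal $\Delta_{G\sb\infty}$, which, as noted in the discussion after the definition of a cellular graph, is equivalent to being reflexive and symmetric.

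Next I would check reflexivity and symmetry component by component. For reflexivity, take any thread $\bar{x}=(x_n)_{n\in\N}\in G\sb\infty$; since each $r_n$ is reflexive, $(x_n,x_n)\in r_n$ for every $n\in\N$, so by the very definition of $r$ we have $(\bar{x},\bar{x})\in r$. For symmetry, suppose $(\bar{x},\bar{y})\in r$, so that $(x_n,y_n)\in r_n$ for every $n$; since each $r_n$ is symmetric, $(y_n,x_n)\in r_n$ for every $n$, and hence $(\bar{y},\bar{x})\in r$.

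There is no real obstacle here: the proof is a direct componentwise transfer of reflexivity and symmetry from the cellular graphs $(G_n,r_n)$ to the inverse limit, using only the definition of the natural relation. The bonding-map condition is not needed for this particular statement; it will become relevant in the next section when the inverse sequence is required to interact with the relations in more subtle ways.
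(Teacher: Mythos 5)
Your proposal is correct and follows essentially the same route as the paper's own proof: it reduces the claim to checking that the natural relation $r$ is reflexive and symmetric, and transfers these properties componentwise from each $r_n$ (the paper states this step in one line, while you spell out the componentwise verification explicitly). No gaps; nothing further is needed.
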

\begin{proof}
    Let $\left\{\left(G_n,r_n\right),g_{n}^{n+1}\right\}$ be an inverse sequence of cellular graphs and let $G\sb\infty$ be its inverse limit.
    By definition, $G\sb\infty$ is a topological space. We only need to check that  $r$ is reflexive and symmetric. This follows immediately since each $r_n$ is reflexive and symmetric.
\end{proof}



We are now ready to define a generalized cell structure. It  consists of just one condition which is a necessary condition implied by  the first  original  condition  for cell structures originally defined in \cite{DT}. It enables the natural relation induced  on the inverse limit of an    inverse sequence of cellular graphs to be transitive and therefore an equivalent relation. 

\begin{definition}\label{GCellStruc}
We say that an inverse sequence of cellular graphs   $\left\{\left(G_n,r_n\right),g_{n}^{n+1}\right\}$ is a generalized cell structure, or a g-cell structure for short, if its natural relation $r$ given in Definition~\ref{natrel} is an equivalence relation.
\end{definition}

\begin{proposition}\label{Prop:implyGcell}
Let $\left\{\left(G_n,r_n\right),g_{n}^{n+1}\right\}$ be an inverse sequence of cellular graphs and let $G\sb\infty$ be its inverse limit.
Each of the following statement implies the next.
\begin{enumerate}
    \item For each $i\in\N$, the relation $r_i$ is an equivalence relation on $G_i$.
    \item  For each thread $\bar{x}\in G\sb\infty$ and for each $i\in\N$ there exists $j\geq i$ such that  
    $g_{i}^{j}\left(B\left(x_j,2r_j\right)\right)\subset B\left(x_i,r_i\right)$.
    \item The inverse sequence  $\left\{\left(G_n,r_n\right),g_{n}^{n+1}\right\}$ is a g-cell structure.
\end{enumerate}
\end{proposition}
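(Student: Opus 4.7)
The proof should proceed by two separate implications, both of which are essentially unpackings of definitions. The key observations are: (a) by Proposition~\ref{relation}, the natural relation $r$ is automatically reflexive and symmetric, so statement (3) reduces to showing that $r$ is transitive; and (b) condition (2) is precisely a ``shrinking'' statement that converts length-2 paths downstairs into length-1 paths at the target level, which is exactly what transitivity of $r$ demands.

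For (1) $\Rightarrow$ (2), I would simply take $j=i$. Since $r_i$ is transitive by hypothesis, any length-2 path in $(G_i,r_i)$ can be replaced by a length-1 path, so $B(x_i,2r_i)\subset B(x_i,r_i)$; combined with the always-valid opposite inclusion, this gives equality, and $g_i^i$ is the identity on $G_i$, so the required inclusion $g_i^i(B(x_i,2r_i))\subset B(x_i,r_i)$ is immediate.

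For (2) $\Rightarrow$ (3), I would fix threads $\bar{x},\bar{y},\bar{z}\in G\sb\infty$ with $(\bar{x},\bar{y})\in r$ and $(\bar{y},\bar{z})\in r$, and prove $(\bar{x},\bar{z})\in r$ componentwise. Fix $i\in\N$ and apply hypothesis (2) to the thread $\bar{x}$ and the index $i$ to obtain $j\geq i$ with $g_i^j(B(x_j,2r_j))\subset B(x_i,r_i)$. From the definitions of $r$ and of $2r_j$, the pairs $(x_j,y_j),(y_j,z_j)\in r_j$ yield $(x_j,z_j)\in 2r_j$, i.e.\ $z_j\in B(x_j,2r_j)$. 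Applying the bonding map and using that $\bar{z}$ is a thread, $z_i=g_i^j(z_j)\in B(x_i,r_i)$, so $(x_i,z_i)\in r_i$. Since $i$ was arbitrary, $(\bar{x},\bar{z})\in r$, establishing transitivity.

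There is no real obstacle here; both implications are immediate once one recognizes that $r$ is already reflexive and symmetric (Proposition~\ref{relation}) and interprets $2r_j$ in terms of neighborhoods. The only point requiring a moment's thought is that in (2) $\Rightarrow$ (3) one must apply condition (2) at the thread $\bar{x}$ (not $\bar{y}$ or $\bar{z}$), because it is the neighborhoods $B(x_j,\cdot)$ and $B(x_i,\cdot)$ that need to be related in order to conclude $(x_i,z_i)\in r_i$.
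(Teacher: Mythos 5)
Your proposal is correct and follows essentially the same route as the paper: for (1)$\Rightarrow$(2) take $j=i$ and use transitivity to get $B(x_i,2r_i)=B(x_i,r_i)$, and for (2)$\Rightarrow$(3) apply the hypothesis at the thread $\bar{x}$ to collapse the length-2 path through $y_j$ into an edge at level $i$, then invoke Proposition~\ref{relation} for reflexivity and symmetry. No gaps; this matches the paper's argument (which in turn reproduces the argument of Lemma~3.1 of Debski--Tymchatyn).
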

\begin{proof}
To prove (1) implies (2), let $\bar{x}\in G\sb\infty$ and let $i\in\N$. Take $j=i$. Since $r_j$ is transitive, by hypothesis, then $B(x_j,2r_j)=B(x_j,r_j)$. Since $g_i^i$ is the identity, this proves that  $(2)$ holds.

Now, we prove (2) implies (3). By Proposition~\ref{relation}, the natural relation is reflexive and symmetric. To check that it is a g-cell structure, the transitivity of the natural relation is the only property missing. The proof  follows from the same argument given in the proof of  \cite[Lemma~3.1]{DT}, which we repeat here for completeness sake. Suppose that  $(\bar{x},\bar{y}),(\bar{y},\bar{z})\in r$ and let $i\in\N$. Then there is   $j\geq i$ such that $g_{i}^{j}\left(B\left(x_j,2r_j\right)\right)\subset B\left(x_i,r_i\right)$. Since $(x_j,y_j),(y_j,z_j)\in r_j$ then $z_j\in B(x_j,2r_j)$. Therefore $z_i=g_i^j(z_j)\in B(x_i,r_i)$, that is, $(x_i,z_i)\in r_i$. Thus $(\bar{x},\bar{z})\in r$, as wanted. 
\end{proof}

Condition (2) in previous Proposition~\ref{Prop:implyGcell}  may be interpreted as follows: For each $\bar{x}\in G\sb\infty$ and each $i\in\N$, there exits  $j\geq i$ such that $g_i^j$ collapses any path of length two  beginning  at the vertex $x_j$ and through three different vertices  to an edge beginning at $x_i$.

Given an inverse sequence of cellular graphs, to prove that it is a g-cell structure will usually be verified by checking condition (2) in Proposition~\ref{Prop:implyGcell}.

Notice that Definition~\ref{GCellStruc} allows the spaces $G_n$ to be not necessary discrete. It would be trivial to propose a g-cell structure of any topological space $X$ by considering the inverse sequence $G_n=X$, $r_n=\Delta$ and the bonding maps $g_{n}^{n+1}$ are all the identity maps. Therefore, to avoid such trivialities, it becomes important to require either that every $G_n$ has the discrete topology or that not all $r_n$ are equal to $\Delta$. The role of the bonding maps, evidently, becomes predominant, as the following examples show.

\begin{example}\label{ex:gcell}
For each $i\in\N$, consider $G_i=[0,1]$ with the standard topology and define
\[
r_i=\Delta \cup\left\{ (x,1-x),(1-x,x)\colon 0<x<\tfrac{1}{2}\right\}\cup\left\{(0,\tfrac{1}{2}),(\tfrac{1}{2},0),(\tfrac{1}{2},1),(1,\tfrac{1}{2})\right\},
\]
while the bonding maps are given by
\[
g_{n}^{n+1}(x)=
    \begin{cases}
        x &\text{ if } 0\leq x < \frac{1}{2}\\
        1-x &\text{ if } \frac{1}{2}\leq x \leq 1
    \end{cases}.
\]
Then $\left\{\left(G_n,r_n\right),g_{n}^{n+1}\right\}$ is a g-cell structure while none of the $G_n$'s is a discrete spaces. 
\end{example}

Observe that Example~\ref{ex:gcell} also exhibits that the converse of the implication $(1)\Rightarrow (2)$ in Proposition~\ref{Prop:implyGcell} is false. The following example will show how a modification on the bonding maps may produce the loss of the cell structure on a sequence of cellular graphs.

\begin{example}\label{ex:noGcell}
   Consider the cellular graphs $\{(G_n,r_n)\}$ given in Example~\ref{ex:gcell}. Now  redefine all the bonding maps $g_{n}^{n+1}$  as the identity maps. Then the resulting inverse sequence   is no longer a cell structure. Indeed, the constant sequences $\bar{0}=(0,0,0, \ldots)$, 
   $\bar{\frac{1}{2}}=(\frac{1}{2},\frac{1}{2},\frac{1}{2}, \ldots)$ and
   $\bar{1}=(1,1,1, \ldots)$ belong to $G\sb\infty$. Furthermore, $(\bar{0},\bar{\frac{1}{2}})\in r$
   and $(\bar{\frac{1}{2}},\bar{1})\in r$. However, $(\bar{0},\bar{1})\not\in r$. Thus $r$ is not an equivalence relation. 
\end{example}


\begin{definition}
Let $\left\{\left(G_n,r_n\right),g_{n}^{n+1}\right\}$ be a g-cell structure. Let $G\sb\infty=\displaystyle\lim_{\longleftarrow} \{G_n,g_{n}^{n+1}\}$ and let $r$ be the induced relation on $G\sb\infty$ as given in Definition~\ref{natrel}. 
We denote the quotient space $G^\ast$ as
\begin{equation*}
    G\sp\ast=G\sb\infty\slash r.
\end{equation*}
We will say that a topological space $X$ admits a g-cell structure if there exists a g-cell structure such that $X$ is homeomorphic to $G\sp\ast$.
\end{definition}

In Example~\ref{ex:gcell} one may show that $G\sb\infty $ is homeomorphic to $[0,\tfrac{1}{2}]$ with the usual topology and that $(\bar{0},\bar{1})$ is the only nontrivial relation in $r$. Thus $G\sb\infty$ is homeomorphic to $S^1$. This gives a g-cell structure for $S^1$ alternative to cell structure obtained in \cite{DT}.

 When every $r_j$ is an equivalence relation, one might think that the inverse limit of $G_j^\ast=G_j\slash r_j$ is equal to $G\sp\ast$. This is not true, as the following example shows.

\begin{example}\label{ex:differentquotient}
   Consider the discrete spaces $G_n=(0,\tfrac{1}{2}]\cap\Q$ together with $r_n=G_n\times G_n$.  Define now the bonding maps $g_{n}^{n+1}$ to be the inclusion maps. Then $G\sp\ast$ is empty, while the inverse limit of $G_{n}^{\ast}$ is the one point space.
\end{example}
The above Example~\ref{ex:differentquotient} may be slightly modified by taking $G_n=\N$, $r_n=G_n\times G_n$ and all bonding maps to be the identity. Then again $G\sp\ast=\N$ is different from the inverse limit of the one-point space $G_{n}^{\ast}$.

\begin{proposition}\label{open}
Let $\bigl\{(G_n, r_n),g_{n}^{n+1}\bigr\}_{n\in\N}$ be a  g-cell structure. The quotient map  $\pi \colon G\sb\infty\to G\sp\ast$ is closed if and only if for every $\bar{x}\in G\sb\infty$
and every open neighbourhood $A\subset G\sb\infty$ of $B(\bar{x},r)$
there exists  $U$ in the base of $G\sb\infty$ such that $\bar{x}\in U$ and
$B\left(U,r\right)\subset A$.
\label{QM_Closed}
\end{proposition}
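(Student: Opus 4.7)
My plan is to reformulate the closedness of $\pi$ into a pointwise ``swelling'' statement and then extract the basis condition directly from it.

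First I would unpack what $\pi\colon G_\infty\to G^\ast$ being closed means: for every closed $C\subset G_\infty$ the saturated set $\pi^{-1}(\pi(C))$ is closed. Since $r$ is an equivalence relation (we are working with a g-cell structure), one checks $\pi^{-1}(\pi(C))=B(C,r)$. Hence $\pi$ is closed if and only if $B(C,r)$ is closed for every closed $C$. Taking complements and letting $A=G_\infty\setminus C$, this is equivalent to the statement: for every open set $A\subset G_\infty$, the set
\begin{equation*}
W(A):=\{\bar{y}\in G_\infty\colon B(\bar{y},r)\subset A\}
\end{equation*}
is open in $G_\infty$. (Indeed, $\bar{y}\notin B(C,r)$ means $(\bar{y},c)\notin r$ for every $c\in C$, i.e.\ $B(\bar{y},r)\cap C=\emptyset$, i.e.\ $B(\bar{y},r)\subset A$.)

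For the ($\Leftarrow$) direction I would assume the basis condition and prove $W(A)$ is open for an arbitrary open $A$. Pick $\bar{x}\in W(A)$; then $A$ is an open neighborhood of $B(\bar{x},r)$, so by hypothesis there is a basic open $U\ni \bar{x}$ with $B(U,r)\subset A$. Since $B(\bar{y},r)\subset B(U,r)$ whenever $\bar{y}\in U$, this yields $U\subset W(A)$, showing $W(A)$ is open and hence $\pi$ is closed.

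For the ($\Rightarrow$) direction I would assume $\pi$ is closed, fix $\bar{x}$ and an open neighborhood $A$ of $B(\bar{x},r)$, and use the reformulation to conclude that $W(A)$ is open. Since $B(\bar{x},r)\subset A$ we have $\bar{x}\in W(A)$, so by Proposition~\ref{basis} there exists a basic open set $U=g_i^{-1}(A_i)$ with $\bar{x}\in U\subset W(A)$. Then $B(U,r)=\bigcup_{\bar{y}\in U}B(\bar{y},r)\subset A$, which is exactly the required conclusion. The only subtlety to watch out for is the clean bookkeeping between ``$B(C,r)$ closed for all closed $C$'' and ``$W(A)$ open for all open $A$''; once that equivalence is established, each implication is a one-line application of the definition of a basis.
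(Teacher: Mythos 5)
Your proof is correct and follows essentially the same route as the paper: both arguments revolve around the set $O=\{\bar{z}\in G\sb\infty\colon B(\bar{z},r)\subset A\}$ (your $W(A)$), showing it is open exactly when the basis condition holds, and extracting the basic open set $U$ through Proposition~\ref{basis}. The only difference is that where the paper invokes \cite[Proposition~2.4.9]{E} to pass between closedness of $\pi$ and openness of this set, you verify that equivalence directly from the quotient topology via the saturation identity $\pi^{-1}(\pi(C))=B(C,r)$ (valid because $r$ is an equivalence relation for a g-cell structure), which merely makes the argument self-contained rather than genuinely different.
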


\begin{proof}
Let $\bar{x}\in G\sb \infty$ and let $A\subset G\sb \infty$ be an open subset in $G\sb\infty$ such that $B(\bar{x},r)\subset A$. Define 
\begin{align*}
   O&=\{\bar{z}\in G\sb\infty
        \colon \pi(\bar{z})\cap A\not=\emptyset
        \}
    =\{\bar{z}\in G\sb \infty: B(\bar{z},r)\subset A\}. 
\end{align*}
If $\pi$ is a closed map, by \cite[Proposition 2.4.9]{E}, the set $O$ is an open set and since $\bar{x}\in O$, by Proposition~\ref{basis}, there exists $U$ in the base of $G\sb\infty$ such that $\bar{x}\in U\subset O$. By definition of $O$, for each $\bar{y}\in U$, $B(\bar{y},r)\subset A$. Thus, $B(U,r)\subset A$.  For the converse, if $\bar{z}\in O$ then, by  hypothesis, there exists a basis element $U$ such that $\bar{z}\in U$ and  $B\left(U,r\right)\subset A$. Let $\bar{y}\in U$ then $B(\bar{y},r)\subset B(U,r)\subset A$ and thus $\bar{y}\in O$. Consequently $U\subset O$ and so $O$ is an open set. By \cite[Proposition 2.4.9]{E}, $\pi$ is a closed map.
\end{proof}

In Propositon~\ref{open}, if  $G_n$ has the discrete topology for all $n$, we obtained a condition similar to  \cite[Proposition~3.8]{DT} for cell structures as the following result shows.

\begin{corollary}
 Let $\bigl\{(G_n, r_n),g_{n}^{n+1}\bigr\}_{n\in\N}$ be a  g-cell structure where every $G_n$ has the discrete topology. If for every $\bar{x}\in G\sb\infty$
and every open neighbourhood $A\subset G\sb\infty$ of $B(x,r)$
there exists $j\in\N$ such that $B(g_{j}^{-1}(x_j), r)\subset A$ then the collection
 \begin{equation*}
    \{G\sp\ast\setminus \pi\left(G\sb\infty\setminus \mathcal V 
    \right) \colon \mathcal V \text{ is open in } 
    G\sb\infty\}
\end{equation*}
is a basis of open sets for the topology of $G\sp\ast$.
\end{corollary}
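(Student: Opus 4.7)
The plan is to reduce the statement to an application of Proposition~\ref{open}. First, I would show that the hypothesis of the corollary implies the quotient map $\pi\colon G\sb\infty \to G\sp\ast$ is closed; then I would verify that, for a closed quotient map, the described collection coincides with the full topology of $G\sp\ast$ and hence is trivially a basis.

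For the first step, I recall from Proposition~\ref{basis} that sets of the form $g_j^{-1}(A_j)$ with $A_j$ a basic open set of $G_j$ form a basis for $G\sb\infty$. Since every $G_n$ is assumed discrete, each singleton $\{x_j\}\subset G_j$ is a basic open set, so every set of the form $g_j^{-1}(x_j)$ is a basic open subset of $G\sb\infty$, and it obviously contains the thread $\bar{x}$ whose $j$-th coordinate is $x_j$. Thus, given $\bar{x}\in G\sb\infty$ and an open neighbourhood $A$ of $B(\bar{x},r)$, the index $j$ supplied by the corollary's hypothesis yields $U:=g_j^{-1}(x_j)$ as a basic open set containing $\bar{x}$ and satisfying $B(U,r)\subset A$. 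This is exactly the criterion appearing in Proposition~\ref{open}, so $\pi$ is a closed map.

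For the second step, assume $\pi$ is closed. Each member $G\sp\ast\setminus \pi(G\sb\infty\setminus \mathcal V)$ of the collection is open in $G\sp\ast$, because $G\sb\infty\setminus \mathcal V$ is closed and hence $\pi(G\sb\infty\setminus\mathcal V)$ is closed in $G\sp\ast$. Conversely, given any open $U\subset G\sp\ast$, the set $\mathcal V:=\pi^{-1}(U)$ is open in $G\sb\infty$ and saturated, so $\pi(G\sb\infty\setminus \mathcal V)=G\sp\ast\setminus U$, which shows $U=G\sp\ast\setminus \pi(G\sb\infty\setminus\mathcal V)$ lies in the collection. Therefore the collection is precisely the topology of $G\sp\ast$, and is in particular a basis.

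I do not foresee any genuine obstacle: the entire argument amounts to recognising that the discreteness hypothesis turns the corollary's condition into a special case of the base condition of Proposition~\ref{open}, and then exploiting the saturated-preimage trick to identify the collection with the topology of $G\sp\ast$. The only point requiring some care is to invoke Proposition~\ref{basis} correctly in order to justify that $g_j^{-1}(x_j)$ is really a basic open subset of $G\sb\infty$.
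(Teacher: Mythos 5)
Your argument is correct, and its first half coincides with what the paper does implicitly: since each $G_j$ is discrete, $g_j^{-1}(x_j)$ is a basic open set of $G_\infty$ containing $\bar{x}$ (Proposition~\ref{basis}), so the corollary's hypothesis is exactly the criterion of Proposition~\ref{open} and $\pi$ is closed. Where you genuinely depart from the paper is in the second half. The paper argues the basis property pointwise: given $[\bar{x}]\in U$ with $U$ open in $G^\ast$, it covers the whole class $B(\bar{x},r)$ by cylinder sets $g_{i_{\bar{y}}}^{-1}(y_{i_{\bar{y}}})$ contained in $\pi^{-1}(U)$, lets $\mathcal{V}$ be their union, and then verifies $[\bar{x}]\in G^\ast\setminus\pi(G_\infty\setminus\mathcal{V})\subset U$, using closedness of $\pi$ only to see that this particular member of the family is open. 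You instead take the saturated set $\mathcal{V}=\pi^{-1}(U)$, so that $\pi(G_\infty\setminus\mathcal{V})=G^\ast\setminus U$ by surjectivity of $\pi$, and conclude that the family is \emph{exactly} the topology of $G^\ast$, hence trivially a basis; closedness of $\pi$ is used to see every member of the family is open. Your route is shorter and isolates the real content (the hypothesis plus discreteness is only needed to make $\pi$ closed; the rest holds for any closed quotient map), and it even proves slightly more than the statement asks. The paper's route is more constructive: it produces basis elements coming from $\mathcal{V}$'s that are unions of cylinders $g_i^{-1}(y_i)$ around the fiber $B(\bar{x},r)$, which is the discrete-approximation shape of neighbourhood one wants in the cell-structure setting (mirroring \cite[Proposition~3.8]{DT}), at the cost of a longer verification. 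The only point to flag in your write-up is that the identity $\pi\bigl(\pi^{-1}(G^\ast\setminus U)\bigr)=G^\ast\setminus U$ uses surjectivity of the quotient map; this is immediate here, and the paper invokes the same identity, so it is a remark rather than a gap.
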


 \begin{proof}
Let $[\bar{x}]\in G^{*}$ and let $U\subset G^{*}$ be an open set containing $[\bar{x}]$. Since $\pi^{-1}(U)$ is an open set in $G_{\infty}$, then by Proposition~\ref{basis}, for each $\bar{y}\in B(\bar{x},r)$ there exists $i_{\bar{y}}$ such that $g_{i_{\bar{y}}}^{-1}(y_{i_{\bar{y}}})\subset \pi^{-1}(U)$. Let $\mathcal{V}=\bigcup \left\{g_{i_{\bar{y}}}^{-1}(y_{i_{\bar{y}}}): \bar{y}\in B(\bar{x},r) \text{ and } g_{i_{\bar{y}}}^{-1}(y_{i_{\bar{y}}})\subset \pi^{-1}(U)\right\}$, then $\mathcal{V}$ is an open set $G_{\infty}$ such that $B(\bar{x},r)\subset \mathcal{V}\subset \pi^{-1}(U)$. By  Proposition \ref{QM_Closed}, $\pi(G_{\infty}\backslash \mathcal{V})$ is a closed set and thus $\mathrm{G^{*}} \backslash \pi(G_{\infty}\backslash \mathcal{V})$ is an open set. On the other hand we have $G_{\infty}\backslash \mathcal{V} \subset G_{\infty}\backslash B(\bar{x},r)$. This mean that $B(\bar{x},r)\cap (G_{\infty}\backslash \mathcal{V})= \emptyset$ and so $[\bar{x}]\notin \pi (G_{\infty}\backslash \mathcal{V})$. Thus $[\bar{x}]\in \mathrm{G^{*}} \backslash \pi (G_{\infty}\backslash \mathcal{V})$. Finally, since $G_{\infty}\backslash \pi^{-1}(U)\subset G_{\infty}\backslash \mathcal{V}$, then $\pi(G_{\infty}\backslash \pi^{-1}(U))\subset \pi(G_{\infty}\backslash \mathcal{V})$, so that $G\sp\ast\setminus\pi(G_{\infty}\backslash \mathcal{V})\subset G\sp\ast\setminus\pi(G_{\infty}\backslash \pi^{-1}(U))$. Now, since $\pi (\pi^{-1}(G^{*} \backslash U))=G^{*} \backslash U$ and $G\sb\infty\setminus\pi^{-1}
(U)=\pi^{-1}(G\sp\ast\setminus U)$ then 
$\pi(G_{\infty}\backslash \pi^{-1}(U))= G^{*} \backslash U$. We conclude that  $G^{*} \backslash \pi(G_{\infty}\backslash \mathcal{V})\subset U$. 
\end{proof}


\begin{lemma}
Let $\bigl\{(G_{i},r_{i}),g_{i}^{i+1}\bigr\}_{i\in \N}$ be a g-cell structure where each $G_{i}$ satisfies that for each $x\in G_{i}$ and for each open set $U$ containing $B(x,r_{i})$, there exists an open set  $O$ such that $x\in O$ and $B(O,r)\subset U$. Let $i\in \N$ and let $\bar{x}=(x_{1},x_{2},x_{3},...)\in G_{\infty}$. Then $A_{x_{i},U}=\bigl\{[\bar{z}]\in G^{*}:z_{i}\in U \text{ and there exists } j>i \text{ such that } (z_{j}, w_{j})\notin r_{j}, \forall \bar{w}\in g_{i}^{-1}\left(B(U,r_{i}\right)\backslash U)\bigr\}$ is an open set in $G^{*}$ and $\pi(\bar{x})\in A_{x_{i},U}$.
\label{LOpenSet}
\end{lemma}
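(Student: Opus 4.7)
The plan is to verify $\pi(\bar{x}) \in A_{x_i,U}$ directly and then establish openness of $A_{x_i,U}$ in $G\sp\ast$ by exhibiting, for each representative $\bar{z}$ of a class in $A_{x_i,U}$, a basic open neighborhood of $\bar{z}$ in $G\sb\infty$ whose image under $\pi$ lies in $A_{x_i,U}$. Throughout I work under the (seemingly implicit) assumption that $U$ is open in $G_i$ with $B(x_i, r_i) \subseteq U$, which matches the form of the regularity hypothesis on the $G_i$.

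For $\pi(\bar{x}) \in A_{x_i,U}$: reflexivity of $r_i$ places $x_i$ in $B(x_i, r_i) \subseteq U$. For condition (b), I take any $j > i$; if a thread $\bar{w} \in g_i^{-1}(B(U, r_i) \setminus U)$ satisfied $(x_j, w_j) \in r_j$, then the edge-preserving property of the bonding maps would yield $(x_i, w_i) \in r_i$ and hence $w_i \in B(x_i, r_i) \subseteq U$, contradicting $w_i \notin U$.

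For openness, given $[\bar{z}] \in A_{x_i,U}$ with representative $\bar{z}$ and witness $j > i$ for (b), I first reformulate (b) as (essentially) the inclusion $B(z_j, r_j) \subseteq (g_i^j)^{-1}(U)$: if $w_j \in B(z_j, r_j)$ extends to a thread $\bar{w}$, then edge preservation yields $(z_i, w_i) \in r_i$, so $w_i \in B(z_i, r_i) \subseteq B(U, r_i)$, and condition (b) then forces $w_i \in U$. The set $(g_i^j)^{-1}(U)$ is open in $G_j$ by continuity of $g_i^j$ and contains $B(z_j, r_j)$, so the regularity hypothesis on $G_j$ (applied to $z_j$) produces an open $O_j \ni z_j$ with $B(O_j, r_j) \subseteq (g_i^j)^{-1}(U)$. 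Setting $W := g_j^{-1}(O_j)$, Proposition~\ref{basis} identifies $W$ as a basic open neighborhood of $\bar{z}$ in $G\sb\infty$. For any $\bar{z}' \in W$ one has $z'_j \in O_j$, so $B(z'_j, r_j) \subseteq (g_i^j)^{-1}(U)$; this immediately gives $z'_i = g_i^j(z'_j) \in U$, and also $(z'_j, w_j) \notin r_j$ for every thread $\bar{w}$ with $w_i \in B(U, r_i) \setminus U$ (otherwise $w_i = g_i^j(w_j) \in U$). Thus $\pi(\bar{z}') \in A_{x_i,U}$ and $W \subseteq \pi^{-1}(A_{x_i,U})$.

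The principal obstacle is that this construction produces open neighborhoods inside $\pi^{-1}(A_{x_i,U})$ only around representatives that themselves satisfy the defining conditions, whereas $\pi^{-1}(A_{x_i,U})$ is the full $r$-saturation of the set of such representatives. To finish the argument one must either verify that this set is already $r$-saturated — which is delicate, since condition (b) at level $j$ for $\bar{z}$ need not imply condition (b) at level $j$ for an arbitrary $\bar{z}' \sim \bar{z}$ without transitivity of $r_j$ — or exploit the regularity hypothesis on $G_i$, together with the transitivity of $r$ furnished by the g-cell structure, to transfer the neighborhood $W$ at a representative in the defining set to any $r$-equivalent thread.
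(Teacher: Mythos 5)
Your proposal is incomplete, and the missing piece is exactly the one you flag at the end: you never show that the defining condition of $A_{x_{i},U}$ is independent of the representative, i.e.\ that the set of threads satisfying it is $r$-saturated. Since $\pi^{-1}(A_{x_{i},U})$ is by definition the full saturation, your basic neighborhoods $W=g_{j}^{-1}(O_{j})$ only surround those points of $\pi^{-1}(A_{x_{i},U})$ that happen to satisfy the condition themselves, and openness does not follow. This saturation step is the first and central part of the paper's proof: given $\bar{z}$ satisfying the condition with witness $j$ and $(\bar{w},\bar{z})\in r$, one first gets $w_{i}\in U$ from the regularity hypothesis (take $O\ni z_{i}$ with $B(O,r_{i})\subset U$ and note $w_{i}\in B(z_{i},r_{i})\subset B(O,r_{i})$), and then chooses $k>j$ with $g_{j}^{k}\bigl(B(z_{k},2r_{k})\bigr)\subset B(z_{j},r_{j})$ (the collapsing condition of Proposition~\ref{Prop:implyGcell}(2)); if some forbidden thread $\bar{y}$ had $(y_{k},w_{k})\in r_{k}$, then $y_{k}\in B(z_{k},2r_{k})$ because $(w_{k},z_{k})\in r_{k}$, hence $(z_{j},y_{j})\in r_{j}$, contradicting the choice of $j$. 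Thus $\bar{w}$ satisfies the condition with witness $k$, the set is saturated, and only then does a local argument at an arbitrary point of $\pi^{-1}(A_{x_{i},U})$ finish the proof. You name this alternative (``verify that the set is already $r$-saturated'') but do not carry it out, so the lemma is not proved.

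Two further remarks. First, your reformulation of condition (b) as $B(z_{j},r_{j})\subseteq (g_{i}^{j})^{-1}(U)$ is only justified for those points of $B(z_{j},r_{j})$ that extend to threads of $G_{\infty}$; arbitrary points of $B(z_{j},r_{j})$ need not map into $U$, so the regularity hypothesis at $z_{j}$ cannot legitimately be applied to the open set $(g_{i}^{j})^{-1}(U)$. The paper sidesteps this by working at level $i$: from $z_{i}\in U$ it takes $O\ni z_{i}$ with $B(O,r_{i})\subset U$ and uses the neighborhood $g_{i}^{-1}(O)$; for $\bar{u}\in g_{i}^{-1}(O)$, any hypothetical edge $(y_{j},u_{j})\in r_{j}$ pushes down to $(y_{i},u_{i})\in r_{i}$, forcing $y_{i}\in B(O,r_{i})\subset U$, a contradiction, so no level-$j$ regularity is needed. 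Second, your added reading $B(x_{i},r_{i})\subseteq U$ does make the membership claim $\pi(\bar{x})\in A_{x_{i},U}$ provable, but it does not match the way the lemma is invoked in Theorem~\ref{Hausdorff}, where $U$ is merely an open set containing $x_{i}$; the statement is admittedly loose about $U$ (and the paper's own proof omits the membership claim), but your hypothesis-strengthening is a choice you should make explicit rather than a resolution of that looseness.
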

        
\begin{proof}
We will show that the condition in $A_{x_{i},U}$ is independent of the representative of the class. Let $\bar{z}\in G_{\infty}$ such that $z_{i}\in U$ and $j>i$ satisfying $(z_{j},y_{j})\notin r_{j}$ for all $\bar{y}\in g_{i}^{-1}(B(U,r_{i})\backslash U)$. Let $\bar{w}\in G_{\infty}$ such that $(\bar{w},\bar{z})\in r$. Since $z_{i}\in U$, there exists an open set $O$ such that $z_{i}\in O$ and $B(O,r)\subset U$. Hence, since $w_{i}\in B(O,r)$, then $w_{i}\in U$. By the definition of g-cell structure, there exits $k>j$ such that $g_{j}^{k}(B(z_{k},2r_{k}))\subset B(z_{j},r_{j})$ and suppose that there exists $\bar{y}\in g_{i}^{-1}(B(U,r_{i}))\backslash U$ such that $(y_{k},w_{k})\in r_{k}$. Then $y_{k}\in B(z_{k},2r_{k})$, which implies $(z_{j},y_{j})\in r_{j}$. This is a contradiction on the choosing of $j$. Thus, for all $\bar{y}\in g_{i}^{-1}(B(U,r_{i})\backslash U)$, $(y_{k},w_{k})\notin r_{k}$, this mean that $\bar{w}$ satisfies the condition in $A_{x_{i},U}$.
        	
Now, we will prove that $A_{x_{i},U}$ is an open set. Let $\bar{z}\in \pi^{-1}(A_{x_{i},U})$, then $z_{i}\in U$ and there exists $j>i$ such that $(z_{j},y_{j})\notin r_{j}$ for all $\bar{y}\in g_{i}^{-1}(B(U,r_{i})\backslash U)$. Since $z_{i}\in U$, then there exists an open set $O$ such that $z_{i}\in O$ and $B(O,r_{i})\subset U$. Then $\bar{z}\in g_{i}^{-1}(O)$. We will show that $g_{i}^{-1}(O)\subset A_{x_{i},U}$. Let $\bar{u}\in g_{i}^{-1}(O)$, then $u_{i}\in O\subset U$. Suppose that there exists $\bar{y}\in g_{i}^{-1}(B(U,r_{i})\backslash U)$ such that $(y_{j},u_{j})\in r_{j}$. Then $(y_{i},u_{i})\in r_{i}$. Hence, $y_{i}\in B(u_{i},r_{i})\subset B(O,r_{i})\subset U$. This is a contradiction because $y_{i}\in B(U,r_{i})\backslash U$. Consequently, $(y_{j},u_{j})\notin r_{j}$ for all $\bar{y}\in g_{i}^{-1}(B(U,r_{i})\backslash U)$. Then $[\bar{u}]\in A_{x_{i},U}$ which implies that $\bar{u}\in \pi^{-1}(A_{x_{i},U})$. Hence, $\pi^{-1}(A_{x_{i},U})$ is an open set, so that $A_{x_{i},U}$ is an open set in $G^{*}$.
\end{proof}

\section{Results}

In this section we present the main results of this work. Since we do not require that for each thread $\bar{x}$ and each $i\in N$ there is $j\geq i$ such that $g_i^j(B(x_i,r_i)$ is finite (a condition that implies that the quotient map $G\sb\infty \mapsto G\sp\ast$ to be a perfect map), we begin by giving alternative condition on the g-cell structure which implies $G\sp\ast$ to be Hausdorff or normal.

\begin{theorem}\label{Hausdorff}
Let $\bigl\{(G_{i},r_{i}),g_{i}^{i+1}\bigr\}_{i\in \N}$ be a g-cell structure where each $G_{i}$ is a Hausdorff space and satisfies that for each $x\in G_{i}$ and for each open set $U$ containing $B(x,r_{i})$, there exists an open set  $O$ such that $x\in O$ and $B(O,r_i)\subset U$. Then the quotient space  $G^{*}$ is a Hausdorff space. 
\end{theorem}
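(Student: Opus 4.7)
The plan is, given $[\bar x] \neq [\bar y]$ in $G^*$, to construct disjoint open neighbourhoods of $\pi(\bar x)$ and $\pi(\bar y)$ using Lemma~\ref{LOpenSet} at a sufficiently deep level $j$. Because $(\bar x, \bar y) \notin r$, there is some index $i_0$ with $(x_{i_0}, y_{i_0}) \notin r_{i_0}$, and the strategy is to upgrade this pointwise separation first to a separation of the $r_j$-balls $B(x_j, r_j)$, $B(y_j, r_j)$ at a higher index, then to a separation by disjoint open sets in $G_j$, and finally through the lemma to a separation in $G^*$.

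First, I would establish that each ball $B(a, r_i)$ is closed in $G_i$, which follows from Hausdorffness combined with the pseudo-regularity hypothesis: for $b \notin B(a, r_i)$, by symmetry $a \notin B(b, r_i)$, so $G_i \setminus \{a\}$ is an open set containing $B(b, r_i)$, and the hypothesis produces an open $O \ni b$ with $B(O, r_i) \subset G_i \setminus \{a\}$, which is therefore a neighbourhood of $b$ disjoint from $B(a, r_i)$. Next I would invoke the g-cell structure through condition~(2) of Proposition~\ref{Prop:implyGcell}: applied to $\bar x$ at level $i_0$, it yields an index $j \geq i_0$ with $g_{i_0}^j(B(x_j, 2r_j)) \subset B(x_{i_0}, r_{i_0})$; since $y_{i_0} = g_{i_0}^j(y_j)$ lies outside $B(x_{i_0}, r_{i_0})$, we obtain $y_j \notin B(x_j, 2r_j)$, forcing $B(x_j, r_j) \cap B(y_j, r_j) = \emptyset$ (otherwise a common element would witness a length-two path).

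With the two balls disjoint and closed, I would then build disjoint open sets $U \supset B(x_j, r_j)$ and $V \supset B(y_j, r_j)$ in $G_j$ by iterating pseudo-regularity. The prototype is to apply the hypothesis to each $a \in B(x_j, r_j)$ with the open neighbourhood $G_j \setminus B(y_j, r_j)$ of $B(a, r_j)$, obtaining an open $O_a \ni a$ with $B(O_a, r_j) \cap B(y_j, r_j) = \emptyset$, and to take $U = \bigcup_{a} O_a$; the set $V$ is produced symmetrically. To guarantee $U \cap V = \emptyset$, one may further need to pass to a larger $j$ where $(x_j, y_j) \notin n r_j$ for a suitably larger $n$, obtained by iterating Proposition~\ref{Prop:implyGcell}(2).

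Finally, Lemma~\ref{LOpenSet} yields that $A_{x_j, U}$ and $A_{y_j, V}$ are open in $G^*$ and contain $\pi(\bar x)$ and $\pi(\bar y)$ respectively, while their disjointness follows from $U \cap V = \emptyset$ together with the representative-independence established in that lemma (a class in both would have every representative's $j$-th coordinate simultaneously in $U$ and in $V$). The main obstacle is the intermediate step of separating two disjoint closed $r_j$-balls by disjoint open sets: Hausdorffness alone does not separate disjoint closed sets, so this step is where the pseudo-regularity is genuinely used, and ensuring the pairwise disjointness of the two neighbourhoods is the delicate point, likely requiring the additional $n r_j$-separation at a larger level.
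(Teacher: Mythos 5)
Your plan stands or falls on the middle step, and that is where it breaks. You need disjoint open sets $U\supset B(x_j,r_j)$ and $V\supset B(y_j,r_j)$ in $G_j$, but the hypothesis only controls neighbourhoods through the relation, not through the topology: applying it to $a\in B(x_j,r_j)$ with the open set $G_j\setminus B(y_j,r_j)$ gives an open $O_a\ni a$ with $B(O_a,r_j)$ (hence $O_a$ itself, by reflexivity) disjoint from $B(y_j,r_j)$, so your $U=\bigcup_a O_a$ avoids the ball $B(y_j,r_j)$ and, symmetrically, $V$ avoids $B(x_j,r_j)$; but nothing prevents some $O_a$ and some $O'_b$ from meeting at a point that is topologically far from both balls, since the sets produced by the hypothesis are not required to stay $r_j$-close to $a$. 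Your proposed repair, passing to a deeper level where $(x_j,y_j)\notin nr_j$, only improves graph-distance separation and does not touch this topological obstruction; separating two disjoint closed sets by disjoint open sets is a normality-type conclusion that Hausdorffness plus the stated ball condition does not yield, and no argument for it is given. A secondary gap: you obtain the disjointness of the balls by invoking condition (2) of Proposition~\ref{Prop:implyGcell} as if it followed from being a g-cell structure, but Definition~\ref{GCellStruc} only asks that the natural relation be an equivalence relation, and the proposition proves (2) $\Rightarrow$ (3), not the converse (the paper's proof of Lemma~\ref{LOpenSet} also slips in this implication, but your argument uses it in an additional, essential place).

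The paper's proof shows that the step you are missing is not needed at all. It stays at the first index $i$ with $(x_i,y_i)\notin r_i$, uses the Hausdorffness of $G_i$ only to separate the two \emph{points} by disjoint open sets $U\ni x_i$, $V\ni y_i$ (no ball containment), and takes the sets $A_{x_i,U}$ and $B_{y_i,V}$ of Lemma~\ref{LOpenSet}; their disjointness is then proved directly: a common class would contain representatives $\bar w$, $\bar u$ with $w_i\in U$, $u_i\in V$ and $(w_i,u_i)\in r_i$, so $w_i\in B(V,r_i)\setminus V$, and the defining condition of $B_{y_i,V}$ then forces $(w_j,u_j)\notin r_j$ for some $j$, contradicting $(\bar w,\bar u)\in r$. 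Note that your own closing argument (representative-independence puts every representative's coordinate in $U\cap V$) also works for mere point-neighbourhoods; had you dropped the ball-separation detour you would essentially have recovered the paper's proof. As written, however, the proposal hinges on the unproven separation of the two closed balls, so it has a genuine gap.
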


\begin{proof}
Let $\pi(\bar{x})$, $\pi(\bar{y})$ in $G^{*}$ such that $\pi(\bar{x})\neq \pi(\bar{y})$ and $\bar{x},\bar{y} \in G_{\infty}$. Then $(\bar{x},\bar{y})\notin r$ and thus there exists $i\in \N$ such that $(x_{i},y_{i})\notin r_{i}$. Since that $G_{i}$ is a Hausdorff space there exists disjoint open sets $U$ and $V$ such that $x_{i}\in U$ and $y_{i}\in V$. Define the following sets:
\begin{equation*}
    A_{x_{i},U}=\bigl\{[\bar{z}]\in G^{*}:z_{i}\in U \text{ and } \exists j>i \text{ such that } (z_{j}, w_{j})\notin r_{j}, \forall \bar{w}\in g_{i}^{-1}\left(B(U,r_{i})\backslash U\right)\bigr\},
\end{equation*}
\begin{equation*}
    B_{y_{i},V}=\bigl\{[\bar{z}]\in G^{*}:z_{i}\in V \text{ and } \exists j>i \text{ such that } (z_{j}, w_{j})\notin r_{j}, \forall \bar{w}\in g_{i}^{-1}\left(B(V,r_{i})\backslash V\right)\bigr\}.
\end{equation*}
By the Lemma \ref{LOpenSet}, $A_{x_{i},U}$ and $B_{y_{i},V}$ are open sets in $G^{*}$ containing $\pi(\bar{x})$ and $\pi(\bar{y})$, respectively. We will show that $A_{x_{i},U}$ and $B_{y_{i},V}$ are disjoint sets. Suppose that there exists $[\bar{z}]\in G^{*}$ such that $[\bar{z}]\in A_{x_{i}}\cap B_{y_{i}}$, then there exists $\bar{w}$ and $\bar{u}$ in $[\bar{z}]$ such that $\bar{w}$ satisfies the condition in $A_{x_{i},U}$ and $\bar{u}$ satisfies the condition in $B_{y_{i},V}$. Since $(w_{i},u_{i})\in r_{i}$, then $w_{i}\in B(V,r_{i})$. But, $U\cap V=\emptyset$ then $w_{i}\in B(V,r_{i})\backslash V$ and so $\bar{w}\in g_i^{-1}(B(V,r_i)\setminus V$. Hence, $(w_{j},u_{j})\notin r_{j}$ for some $j$. This is a contradiction because $\bar{u}$ and $\bar{w}$ are in the same equivalence class.
\end{proof}

\begin{corollary}
 If $\left\{\left(G_i,r_i\right),g_{i}^{i+1}\right\}$ is a g-cell structure where each $G_{i}$ is a discrete space then $G\sp\ast$ is a Hausdorff space. 
\end{corollary}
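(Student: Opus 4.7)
The plan is to verify that the hypotheses of Theorem~\ref{Hausdorff} are automatically satisfied when each $G_i$ is discrete, and then invoke that theorem directly. There are two things to check: that each $G_i$ is Hausdorff, and that each $G_i$ satisfies the ``neighborhood-shrinking'' property that for every $x\in G_i$ and every open $U\supset B(x,r_i)$ there is an open $O$ with $x\in O$ and $B(O,r_i)\subset U$.

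First I would note that any discrete space is Hausdorff, so condition (1) of Theorem~\ref{Hausdorff} is immediate. For condition (2), the key observation is that in a discrete space the singleton $\{x\}$ is itself open. Given $x\in G_i$ and any open set $U\supset B(x,r_i)$, I would simply take $O=\{x\}$. Then $B(O,r_i)=B(x,r_i)\subset U$, which is exactly what the hypothesis of Theorem~\ref{Hausdorff} requires.

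Having verified both hypotheses, the conclusion $G\sp\ast$ is Hausdorff follows directly from Theorem~\ref{Hausdorff}. There is no real obstacle here: the discreteness makes the neighborhood-shrinking condition essentially trivial, since singletons already work as the required open neighborhoods. The only subtlety worth flagging is that one should not confuse this with any condition on finiteness of the $B(x,r_i)$; no such restriction is needed because Theorem~\ref{Hausdorff} already avoids the perfect-map assumption used in the original cell-structure setting.
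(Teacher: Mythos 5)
Your proof is correct and is essentially identical to the paper's own argument: discreteness gives Hausdorffness of each $G_i$, and taking $O=\{x\}$ trivially verifies the neighborhood-shrinking hypothesis, so Theorem~\ref{Hausdorff} applies directly.
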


\begin{proof}
Since $G_{i}$ is a discrete space, then it is a Hausdorff space. Also, if $x\in G_{i}$ and $U$ is an open set containing $B(x,r_{i})$, then $O=\{x\}$ is an open set containing $x$ and $B(O,r_{i})\subset U$. By the Theorem \ref{Hausdorff}, $G^{*}$ is a Hausdorff space.
\end{proof}

\begin{theorem}\label{normal}
Let  $X$ be a space admitting a g-cell structure. Then $X$ is normal provided its g-cell structure satisfies the  following properties: (i) Every $G_n$ is a metric space. (ii)  For every 
$\bar{x}\in G\sb\infty$ and for every open neighborhood $A\subset G\sb\infty$ of $B(\bar{x}, r)$ there exists 
 $U$ in the base of $G\sb\infty$ such that $\bar{x}\in U$ and  $B(U,r)\subset A$.
\end{theorem}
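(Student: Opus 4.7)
The plan is to combine two standard facts: first, that $G\sb\infty$ itself is normal (in fact metrizable), and second, that condition (ii) makes the quotient map $\pi\colon G\sb\infty\to G\sp\ast$ closed, and closed continuous surjective images of normal spaces are normal.

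\textbf{Step 1: Normality of $G\sb\infty$.} Since every $G_n$ is a metric space and the index set $\N$ is countable, the product $\prod_{n\in\N}G_n$ is metrizable (for instance via the standard bounded metric $d(\bar x,\bar y)=\sum_{n\in\N}2^{-n}d_n(x_n,y_n)/(1+d_n(x_n,y_n))$, which induces the product topology). Since $G\sb\infty$ is a subspace of this product, it is metrizable, and hence normal.

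\textbf{Step 2: $\pi$ is closed.} This is exactly Proposition~\ref{open} applied under hypothesis (ii): for every $\bar x\in G\sb\infty$ and every open neighborhood $A$ of $B(\bar x,r)$ there exists a basic open $U$ with $\bar x\in U$ and $B(U,r)\subset A$, which is the criterion shown there to be equivalent to $\pi$ being closed.

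\textbf{Step 3: Transfer normality through the closed quotient.} Let $F_1,F_2\subset G\sp\ast$ be disjoint closed sets. Their preimages $\pi^{-1}(F_1)$ and $\pi^{-1}(F_2)$ are disjoint closed subsets of the normal space $G\sb\infty$, so by Step~1 we can separate them by disjoint open sets $W_1,W_2\subset G\sb\infty$. Define
\begin{equation*}
V_i=G\sp\ast\setminus\pi(G\sb\infty\setminus W_i),\qquad i=1,2.
\end{equation*}
Since $\pi$ is closed by Step~2, each $V_i$ is open in $G\sp\ast$. For $y\in F_i$ one has $\pi^{-1}(y)\subset\pi^{-1}(F_i)\subset W_i$, so $\pi^{-1}(y)\cap(G\sb\infty\setminus W_i)=\emptyset$, giving $y\in V_i$; thus $F_i\subset V_i$. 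Finally, if $y\in V_1\cap V_2$, then $\pi^{-1}(y)\subset W_1\cap W_2=\emptyset$, contradicting surjectivity of $\pi$. Hence $V_1\cap V_2=\emptyset$, and $G\sp\ast$ is normal.

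The only substantive work is Step~1 (identifying that the metric-space hypothesis on each $G_n$ plus countability of the index gives metrizability, hence normality, of $G\sb\infty$); the rest is a direct invocation of Proposition~\ref{open} and the classical ``closed map from normal = normal'' lemma, so I do not anticipate a genuine obstacle, only a careful citation of these two facts.
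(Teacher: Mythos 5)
Your proof is correct and follows essentially the same route as the paper: hypothesis (ii) plus Proposition~\ref{open} gives that $\pi$ is closed, metrizability of the countable product of metric spaces gives normality of $G\sb\infty$, and normality passes to the image under a closed continuous surjection. The only difference is that you prove this last classical fact explicitly (via the sets $G\sp\ast\setminus\pi(G\sb\infty\setminus W_i)$) where the paper simply cites Engelking, so the argument is the same in substance.
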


\begin{proof}
By Proposition~\ref{QM_Closed}, the quotient map $\pi:G\sb\infty\to G^{*}$ is closed. Since every $G_n$ is metric then $\prod_{n\in\N} G_n$ is also metric, \cite[Theorem 4.2.2, pg. 259]{E}. Therefore
$G\sb \infty$ is a normal space and
applying \cite[Theorem 1.5.20, pg.~46]{E} we get that $G^{*}$ is a  normal space and thus $X$ is a normal space.
\end{proof}

\begin{theorem}\label{wedge}
 Let $\{X_k\}_{k\in\N}$ be a sequence of topological spaces each admitting a cellular structure and without isolated points. Then the wedge sum $X=\bigvee_{k\in\N} X_k$ does not admit a cell structure but does admit a g-cell structure.
\end{theorem}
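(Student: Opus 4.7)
The plan is to handle the two halves of the theorem independently: the non-admissibility of a cell structure via a first-countability obstruction at the wedge point, and the admissibility of a g-cell structure via a wedge construction on the given cell structures of the $X_k$'s.

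For the first half, I would show that $X$ is not first countable at the wedge point $*$; since a space admitting a cell structure is topologically complete by \cite{DT,DT2018} and in particular first countable, this rules out a cell structure on $X$. The diagonal argument runs as follows: given any countable family $U_n=\bigvee_k V_n^{(k)}$ of basic wedge neighborhoods of $*$, use that each $X_n$ is $T_1$ (it admits a cell structure) and has no isolated points to choose $y_n\in V_n^{(n)}\setminus\{*_n\}$ and set $W_n:=V_n^{(n)}\setminus\{y_n\}$, a strictly smaller open neighborhood of $*_n$ in $X_n$. Then $W=\bigvee_n W_n$ is a basic open neighborhood of $*$ with $U_n\not\subset W$ for every $n$, contradicting that $\{U_n\}$ is a local basis.

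For the second half, I would wedge the given cell structures. Let $\{(H_n^k,s_n^k),h_n^{n+1,k}\}$ be a cell structure of $X_k$ and pick a thread $\bar{p}^k\in H_\infty^k$ representing $*_k$, with $n$-th coordinate $p_n^k$. Build $G_n$ by taking the wedge of the $H_n^k$'s at the $p_n^k$'s, identified to a single basepoint $*_n\in G_n$, with bonding maps induced by the $h_n^{n+1,k}$'s (which are basepoint-preserving, so the induced maps are well defined). Define $r_n$ to include all within-component edges from each $s_n^k$ together with cross-component edges between $B(p_n^k,s_n^k)$ and $B(p_n^{k'},s_n^{k'})$ for $k\neq k'$; the cross-edges are there precisely to identify the basepoint preimages in the different $H_\infty^k$'s into a single equivalence class of the natural relation on $G_\infty$. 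A routine inspection then identifies $G_\infty$ set-theoretically with the wedge of the $H_\infty^k$'s along the basepoint thread and identifies $G^\ast$ topologically with $\bigvee_k X_k=X$.

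Verifying the g-cell condition reduces to checking Proposition~\ref{Prop:implyGcell}(2). For a thread $\bar{x}$ labeled by $k$ and lying outside the basepoint class of $H_\infty^k$, combining the cell-structure condition for $\bar{x}$ with the one for $\bar{p}^k$ in $X_k$ yields a stage $j$ so large that $x_j\notin B(p_j^k,2s_j^k)$; then no cross-edge is reachable from $x_j$ within two steps, so $B(x_j,2r_j)\subset H_j^k$ and the desired containment follows from the condition already known in $X_k$. The hard part will be a thread in the basepoint class, for which $B(x_j,2r_j)$ reaches into the basepoint 2-balls of all other $H_j^l$'s simultaneously and one needs a single $j$ that suits infinitely many indices $l$. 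I would resolve this with a telescoping refinement of the construction (for example $G_n=\bigvee_{k\leq n}H_{n-k+1}^k$, with the freshly introduced factor collapsed to the basepoint by the next bonding map), so that at level $j$ only the finitely many cross-$l$'s with $l\leq j$ appear: those with $l>i$ have their contribution already collapsed to $*_i$ by the bonding, while those with $l\leq i$ are dispatched by the cell-structure condition for $\bar{p}^l$ in $X_l$, and the maximum of the resulting finitely many stages furnishes the required $j$.
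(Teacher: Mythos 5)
Your first half is fine and is essentially the paper's own argument: the paper also rules out a cell structure by showing that the wedge point has no countable neighbourhood base, using that each $X_k$ is metrizable because it admits a cell structure (there the small neighbourhoods are metric balls, while you delete a point using $T_1$; both versions work). One caution: ``topologically complete'' by itself does not imply first countable; what you actually need, and what the paper cites, is the complete metrizability of spaces admitting (inverse--sequence) cell structures from \cite{DT}.

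The second half has a genuine gap, and it sits exactly where you write ``routine inspection''. With the telescoped sequence $G_n=\bigvee_{k\le n}H^k_{n-k+1}$, where the freshly introduced factor is collapsed to the basepoint by the next bonding map, every thread that is ever non-basepoint lives in a single component $k$ and has the wedge point as its coordinate at every level $i<k$; hence the basic neighbourhood $g_i^{-1}(\ast_i)$ of the constant basepoint thread contains \emph{every} thread of \emph{every} component $k>i$. Since any open subset of $G_\infty$ containing that thread contains some $g_i^{-1}(\ast_i)$, every open neighbourhood of the basepoint class in $G^{\ast}$ contains the whole image of $X_k$ for all $k>i$. The wedge $\bigvee_k X_k$ has no such neighbourhood: each $X_k$ is $T_1$ with more than one point, so $\bigvee_k\bigl(X_k\setminus\{q_k\}\bigr)$ with $q_k\neq p_k$ is an open neighbourhood of the wedge point missing a point of every component. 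So the very modification you introduce to obtain a single stage $j$ destroys the identification $G^{\ast}\cong X$ (you obtain a shrinking-wedge-type quotient, not the wedge). Even without telescoping, wedging the level graphs at one vertex creates a single thread lying over the basepoint of all components simultaneously, so saturated open sets around the basepoint class are forced to contain a cylinder at one uniform level in all components at once, again coarser than the wedge topology for the structures of \cite{DT}. The paper resolves the ``infinitely many components at once'' difficulty differently: it keeps the level graphs \emph{disjoint} (so no thread meets two components and neighbourhood levels may vary with the component), it \emph{relates} rather than identifies the basepoint neighbourhoods by extra edges $B_i$ between them, and it re-indexes each component's cell structure along a subsequence $j_i^k$ chosen so that the collapsing property for the basepoint thread holds between every two consecutive levels; then the cross-edges collapse at every stage with no finiteness needed, $G_\infty$ is literally $\bigsqcup_k H^k_\infty$, and a two-step quotient (first the componentwise relations, then the basepoint identification) yields exactly $\bigvee_k X_k$. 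A smaller wrinkle in your verification: for a basepoint-class thread $\bar{x}$ in component $k$, the same-component part of $B(x_j,2r_j)$ contains $B(p_j^k,s_j^k)$, and the condition for $\bar{p}^k$ alone only sends it into $B(p_i^k,s_i^k)$, which need not lie in $B(x_i,r_i)$; one must instead use the condition for $\bar{x}$ itself together with the inclusion $B(p_m^k,s_m^k)\subset B(x_m,2s_m^k)$.
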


\begin{proof}
    
For each $k\in \N$, let us denote $p_{k}\in X_{k}$ the distinguished base point in $X_{k}$ and let $q:\bigsqcup_{k\in \N}X_{k}\to \bigvee_{k\in\N} X_k$ the quotient map which identifies all the points $p_{k}$ into a single point $x_0$.
Since each $X_{k}$ is a metrizable space (in fact completely me\-tri\-za\-ble by \cite[Theorem~3.6]{DT}), let $\rho_{k}$ be a metric in $X_{k}$, for each $k\in \N$. Suppose that there exists a countable neighborhood base $\{u_{i}\}_{i\in \N}$ of the point $x_{0}$. Since $q^{-1}(u_{k})$ is an open set in $\sqcup_{k\in \N}X_{k}$, then $q^{-1}(u_{k}) \cap X_{k}$ is an open set in $X_{k}$. Also, $q^{-1}(u_{k}) \cap X_{k}$ is a non-degenerate set because $x_{0}\in q^{-1}(u_{k}) \cap X_{k}$ and $X_{k}$ is a topological space without isolated points. Then there exists $y_{k}\in q^{-1}(u_{k}) \cap X_{k}\backslash \{p_{k}\}$. Put 
$d_{k}=\rho_{k}(p_{k},y_{k})$ and denote $B_k=\{y_k\in X_k\colon \rho_k(y_k,p_k)<d_k\}$ the open ball with center $p_k$ and radius $d_k$.  By definition of the mapping $q$ we have $q^{-1}\left(q\left(\sqcup B_k\right)\right)=\sqcup B_{k}$, then by definition of quotient topology, we have that $q\left(\sqcup B_k\right)$ is an open set in $X=\bigvee_{k\in \N} X_{k}$ which contains $x_0$. We claim that $q(\sqcup B_k)$  does not contain any of the elements of the base $\{u_{i}\}_{i\in \N}$. Indeed, if $u_{m}\subset q(\sqcup B_{k})$, then  $q^{-1}(u_{m})\subset q^{-1}(q(\sqcup B_{k}))=\sqcup B_{k}$. Hence, $y_m\in q^{-1}(u_{m})\cap X_{m}\subset B_m$, a contradiction. Thus, $x_{0}$ does no admit a countable neighborhood base. Then $X$ is not a first countable space. We conclude that $X$ is not a metrizable space. Thus X  does not admit a cell structure by \cite[Theorem~3.6]{DT}.

For each $k\in \N$, let $(\ast)_{k}=\{(G_{i}^{k},r_{i}^{k}),(g_{k})_{i}^{i+1}\}$ be a cell structure for $X_k$ built as in \cite[Theorem 4.3]{DT}. The cell structure $(\ast)_{k}$ determines a inverse limit which will be denoted by $G_{\infty}^{k}$ and $G_{k}^{*}$ will denote the space  determined by the cell structure. The $i^{th}$ projection map restricted to $G_{\infty}^{k}$ will be denoted by $(g_{k})_{i}$.  The quotient map of $G_{\infty}^{k}$ onto $G_{k}^{*}$ will be denoted by $\pi_{k}$ while $\varphi_{k}$ will denote the homeomorphism of $G_{k}^{*}$ onto $X_{k}$.

For each $k\in\N$, fix $\bar{x}_k=(x_{1}^{k},x_{2}^{k},\ldots) \in (\phi_k\circ\pi_k)^{-1}(p_k)$. Furthermore, since $(*)_k$ is a cell structure for $X_k$, we may choose inductively $j_{i}^{k}>j_{i-1}^{k}$ the least natural number such $(g_k)_{j_{i-1}^k}^{j_i^k}\bigl(B\bigl(x_{j_i^k}^k,2r_{j_i^k}^k\bigr)\bigr)\subset B\bigl(x_{j_{i-1}^k}^k,r_{j_{i-1}^k}^k\bigr)$, where $j_0^k=1$. We define the following sets:
\begin{itemize}
           \item $G_{i}=\bigcup_{k\in \mathbb{N}} G_{j_i^k}^{k}$
           \item $A_{i}=\bigcup_{k\in \mathbb{N}} r_{j_{i}^{k}}^{k}$
           \item $B_{i}=\bigcup_{k,t} B\left((g_k)_{j_i^k}(\varphi_k\circ \pi_k)^{-1}(p_k),r_{j_i^k}^k\right)\times B\left((g_t)_{j_i^t}(\varphi_t\circ \pi_t)^{-1}(p_t),r_{j_i^t}^t\right)$
\end{itemize}
For each $i\in \mathbb{N}$, we will consider on $G_{i}$ the relation $R_i=A_i\cup B_i$. Notice that each $R_i$ is a reflexive relation, because $A_i$ contains to the diagonal of $G_i\times G_i$. Also, $R_i$ is a symmetric relation, since $A_i$ and $B_i$ are symmetric. Thus, for each $i\in \mathbb{N}$, $(G_i,R_i)$ is a cellular graph.
We define, for each $i\in \N$, the bonding maps as follow: let $f_{i}^{i}$ be the identity on $G_i$ and $f_{i}^{i+1}:G_{i+1} \to G_{i}$ defined by
       \begin{equation*}
           f_{i}^{i+1}(x)= 
    			(g_{k})_{j_{i}^k}^{j_{i+1}^k}(x), \text{  } x\in G_{j_{i+1}^k}^{k}.
       \end{equation*}
       We will prove that $\bigl(f_{i}^{i+1}(x),f_{i}^{i+1}(y)\bigr)\in R_{i}$ whenever $(x,y)\in R_{i+1}$.
       \begin{itemize}
           \item If $(x,y)\in A_{i+1}$, then $(x,y)\in r_{j_{i+1}^k}^k$ for some $k\in \mathbb{N}$. Since $(g_k)_{j_{i}^{k}}^{j_{i+1}^k}$ is a composition of bonding maps, we get that $\left(f_{i}^{i+1}(x),f_{i}^{i+1}(y)\right) \in r_{j_{i}^{k}}^k$. Therefore, $\bigl(f_{i}^{i+1}(x),f_{i}^{i+1}(y)\bigr)\in A_i\subset R_i$.
           \item If $(x,y)\in B_{i+1}$, there exist $l,k\in \mathbb{N}$ such that $x\in B\bigl((g_k)_{j_{i+1}^k}(\varphi_k\circ \pi_k)^{-1}(p_k),r_{j_{i+1}^k}^k\bigr)$ and $y\in B\bigl((g_l)_{j_{i+1}^l}(\varphi_l\circ \pi_l)^{-1}(p_l),r_{j_{i+1}^l}^l\bigr)$. Thus, there is $z=(g_k)_{j_{i+1}^k}(\bar{z})$ such that $\bar{z}=(\varphi_k\circ\pi_{k})^{-1}(p_k)$ and $(x,z)\in r_{j_{i+1}^k}^k$. This implies that $f_{i}^{i+1}(x)=(g_{k})_{j_{i}^{k}}^{j_{t+1}^{k}}(x)$ and $f_{i}^{i+1}(z)=(g_{k})_{j_{i}^{k}}^{j_{t+1}^{k}}(z)$, therefore $\bigl(f_{i}^{i+1}(x),f_{i}^{i+1}(z)\bigr)\in r_{j_{i}^{k}}^{k}$. Also, since $f_{i}^{i+1}(z)=(g_{k})_{j_{i}^{k}}^{j_{i+1}^{k}}\bigl((g_k)_{j_{i+1}^{k}}(\bar{z})\bigr)=(g_{k})_{j_{i}^{k}}(\bar{z})$, we have $f_i^{i+1}(x)\in B\bigl((g_k)_{j_{i}^{k}}(\varphi_k\circ \pi_k)^{-1}(p_k),r_{j_i^k}^k\bigr)$. In the same way, we get that $f_i^{i+1}(y)\in B\bigl((g_l)_{j_{i}^{l}}(\varphi_l\circ \pi_l)^{-1}(p_l),r_{j_i^k}^l\bigr)$. Hence, we conclude that $\bigl(f_i^{i+1}(x),f_i^{i+1}(y)\bigr)\in B_{i}\subset R_i$.
       \end{itemize}
       Therefore $\bigl\{\bigl(G_i,R_i\bigr),f_i^{i+1}\bigr\}$ is a inverse sequence of cellular graphs and we will denote $G\sb\infty$ its inverse limit. Now to show that the condition of g-cell structure is satisfied, let $\bar{x}\in G_\infty$ and let $i\in \mathbb{N}$. Then, for some $t\in \mathbb{N}$, $\bar{x}\in \lim\limits_{\longleftarrow} \{F_{i}^{t},f_{i}^{i+1}\}:=F_{\infty}^t$, where we set, for each $i\in \mathbb{N}$, $F_i^t = G_{j_i^t}^t$. Notice that $G_{\infty}^{t}=F_{\infty}^{t}$, because the set $\{j_s^t: s\in \mathbb{N}\}$ is cofinal in $\mathbb{N}$. Then, there is $\bar{x}'=(x_1',x_2',\ldots,x_i',\ldots)\in G_{\infty}^{t}$ which corresponds to $\bar{x}=\left(x_{j_1^t}^t,x_{j_2^t}^t,\ldots,x_{j_i^t}^t,\ldots\right)\in \lim\limits_{\longleftarrow} \{F_{i}^{t},f_{i}^{i+1}\}$. For $i'=j_i^t\in \mathbb{N}$, let $k> j_{i+2}^t>i$ such that $(g_t)_{i'}^{j_k^t}(B(x'_{j_k^t},2r_{j_k^t}^t))\subset B(x'_{i'}\,{},r_{i'}^t)$. We will show that $f_i^{k}(B(x_{k},2R_{k}))\subset B(x_i,R_i)$. Let $y\in B(x_{k},2R_{k})$. 
      Notice that $2R_{k}=2A_k \cup 2B_{k} \cup (A_k+B_k) \cup (B_k+A_k)$. 
       	      \begin{itemize}
       	          \item If $(x_k,y)\in 2A_k$, then there exists $z\in G_k$ such that $(x_k,z)\in A_k$ and $(z,y)\in A_k$. Since $x_k\in G_{j_k^t}^t$, then $(x_k,z)\in r_{j_k^t}^t$ and necessarily $z\in G_{j_k^t}^t$. Thus $(z,y)\in r_{j_k^t}^t$. This implies that $y\in B\bigl(x_{k},2r_{j_k^t}^t\bigr)=B\bigl(x'_{j_k^t},2r_{j_k^t}^t\bigr)$. Therefore $f_i^k(y)=(g_t)_{i'}^{j_k^t}(y)\in B(x'_{i'},r_{i'}^t)=B(x'_{j_i^t},r_{j_i^t}^t)=B(x_i,r_{j_i^t}^t)\subset B(x_i,A_i)$. We get that $(f_i^k(y),f_i^k(x_k))\in A_i\subset R_i$.
       	          \item If $(x_k,y)\in 2B_k$, since $B_k$ is transitive, we have $(x_k,y)\in B_k$ and so we obtain the result.
       	          \item If $(x_k,y)\in A_{k}+B_{k}$, there exists $z\in G_{k}$ such that $(x_k,z)\in A_{k}$ and $(z,y)\in B_{k}$. Since $(x_k,z)\in A_{k}$, then $(x_k,z)\in r_{j_k^t}^{t}$. Hence $\left((g_t)_{j_{i+1}^t}^{{j_k^t}}(x_k),(g_t)_{j_{i+1}^t}^{{j_k^t}}(z)\right)\in r_{j_{i+1}^t}^t$. On the other hand, we have that $z\in G_{j_k^t}^t$ and $y\in G_{j_k^s}^s$ for some $s\in \mathbb{N}$. Since $(z,y)\in B_k$, then $(g_t)_{j_{i+2}^t}^{j_{k}^{t}}(z)\in B\left((g_t)_{j_{i+2}^t}(\varphi_t\circ \pi_t)^{-1}(p_t),r_{j_{i+2}^t}^t\right)$ and $(g_s)_{j_{i}^s}^{j_{k}^{s}}(y)\in$ \mbox{} $B\left((g_s)_{j_{i}^s}(\varphi_s\circ \pi_s)^{-1}(p_s),r_{j_{i}^s}^s\right)$. Now, $((g_t)_{j_{i+2}^t}^{j_{k}^{t}}(z),w)\in r_{j_{i+2}^t}^t$ for some $w\in (g_t)_{j_{i+2}^t}(\varphi_t\circ \pi_t)^{-1}(p_t)$ and therefore $(w,(g_t)_{j_{i+2}^t}(\bar{x}_t))\in r_{j_{i+2}^t}^t$. This mean that $(g_t)_{j_{i+2}^t}^{j_{k}^{t}}(z)\in B((g_s)_{j_{i+2}^t}(\bar{x}_t)),2r_{j_{i+2}^t}^t)$ and by the hypothesis, $(g_t)_{j_{i+1}^t}^{j_{k}^{t}}(z)\in B((g_t)_{j_{i+1}^t}(\bar{x}_t)),r_{j_{i+1}^t}^t)$. We obtain $(g_t)_{j_{i+1}^t}^{{j_k^t}}(x_k)\in B\left((g_t)_{j_{i+1}^t}(\bar{x}_t),2r_{j_{i+1}^t}^t\right)$. Also, $g_{j_i^t}^{j_{i+1}^t}\left(B\left((g_t)_{j_{i+1}^t}(\bar{x}_t),2r_{j_{i+1}^t}^t\right)\right)\subset B\left((g_t)_{j_i^t}(\bar{x}_t),r_{j_i^t}\right)$. Then $f_i^k(x_k)=g_{j_i^t}^{j_k^t}(x_k)\in B\left((g_t)_{j_i^t}(\bar{x}_t),r_{j_i^t}^t\right)$.
       	          We conclude that $f_i^k(x_k)\in B\left((g_t)_{j_i^t}(\bar{x}_t),r_{j_i^t}^t\right)\subset B\left((g_t)_{j_i^t}(\varphi_t\circ \pi_t)^{-1}(p_t),r_{j_i^t}^t\right)$. Since $f_i^k(y)=(g_s)_{j_{i}^s}^{j_{k}^{s}}(y)\in B\left|((g_s)_{j_i^s}(\bar{x}_s),r_{j_i^s}^s\right)\subset B\left((g_s)_{j_i^s}(\varphi_s\circ \pi_s)^{-1}(p_s),r_{j_i^s}^s\right)$, we get that $\left(f_i^k(x_k),f_i^k(y)\right)\in B_{i}\subset R_i$.
       	          \item If $(x_k,y)\in B_{k}+A_{k}$, there exists $z\in G_{k}$ such that $(x_k,z)\in B_{k}$ and $(z,y)\in A_{k}$. Since $(z,y)\in A_{k}$, there exists $s\in \mathbb{N}$ such that $(z,y)\in r_{j_k^s}^{s}$. Hence $((g_s)_{j_{i+1}^s}^{{j_k^s}}(y),(g_s)_{j_{i+1}^s}^{{j_k^s}}(z))\in r_{j_{i+1}^s}^s$. In the other hands, we have that $z\in G_{j_k^s}^s$ and $x_k\in G_{j_k^t}^t$. Since $(x_k,z)\in B_k$, then $(g_s)_{j_{i+2}^s}^{j_{k}^{s}}(z)\in B((g_s)_{j_{i+2}^s}(\varphi_s\circ \pi_s)^{-1}(p_s),r_{j_{i+2}^s}^s)$ and $(g_t)_{j_{i}^t}^{j_{k}^{t}}(x_k)\in B((g_t)_{j_{i}^t}(\varphi_t\circ \pi_t)^{-1}(p_t),r_{j_{i}^t}^t)$. Thus $((g_s)_{j_{i+2}^s}^{j_{k}^{s}}(z),w)\in r_{j_{i+2}^s}^s$ where $w\in (g_s)_{j_{i+2}^s}(\varphi_s\circ \pi_s)^{-1}(p_s)$. Hence, $(w,(g_s)_{j_{i+2}^s}(\bar{x}_s))\in r_{j_{i+2}^s}^s$. This implies that $(g_s)_{j_{i+2}^s}^{j_{k}^{s}}(z)\in B((g_s)_{j_{i+2}^s}(\bar{x}_s)),2r_{j_{i+2}^s}^s)$ and by the hypothesis, we get that $(g_s)_{j_{i+1}^s}^{j_{k}^{s}}(z)\in B((g_s)_{j_{i+1}^s}(\bar{x}_s)),r_{j_{i+1}^s}^s)$. Notice that $(g_s)_{j_{i+1}^s}^{{j_k^s}}(y)\in B((g_s)_{j_{i+1}^s}(\bar{x}_s),2r_{j_{i+1}^s}^s)$. Also, we have $g_{j_i^s}^{j_{i+1}^s}(B((g_s)_{j_{i+1}^s}(\bar{x}_s),2r_{j_{i+1}^s}^s))\subset B((g_s)_{j_i^s}(\bar{x}_s),r_{j_i^s}^s)$. Therefore $f_i^k(y)=g_{j_i^s}^{j_k^s}(y)\in B((g_s)_{j_i^s}(\bar{x}_s),r_{j_i^s}^s)$.
       	          We conclude that $f_i^k(y)\in B((g_s)_{j_i^s}(\bar{x}_s),r_{j_i^s}^s)\subset B((g_s)_{j_i^s}(\varphi_s\circ \pi_s)^{-1}(p_s),r_{j_i^s}^s)$. Since $f_i^k(x_k)=(g_t)_{j_{i}^t}^{j_{k}^{t}}(x_k)\in B((g_t)_{j_i^t}(\bar{x}_t),r_{j_i^t}^t)\subset B((g_t)_{j_i^t}(\varphi_t\circ \pi_t)^{-1}(p_t),r_{j_i^t}^t)$, we get that $(f_i^k(x_k),f_i^k(y))\in B_{i}\subset R_i$.
       	      \end{itemize}

\vspace{1cm}

We now define the following sets:
       	       \begin{equation*}
       	          A=\{(\bar{x},\bar{y})\in G\sb\infty\times G\sb\infty:(x_i,y_i)\in A_{i}, \text{ for each } i\in \mathbb{N}\}
       	      \end{equation*}
and
       	      \begin{equation*}
       	          B=\{(\bar{x},\bar{y})\in G\sb\infty\times G\sb\infty:(x_i,y_i)\in B_{i}, \text{ for each } i\in \mathbb{N}\}.
       	      \end{equation*}
       	      Notice that since $A$ is the induced relation on $G\sb\infty$ by the reflexive and symmetric relation $A_i$ on $G_i$, for each $i$, we have that $A$ is an equivalence relation. So we will consider the quotient space  $G_{\infty}/A$ (which is in fact homeomorphic to $\sqcup X_k$) and we will introduce and equivalence relation on $G\sb\infty\slash A$ as follows: 
\begin{eqnarray*}
B'=\bigl\{\bigl([\bar{x}]_A,[\bar{y}]_A\bigr)\in G\sb\infty\slash A\times G\sb\infty\slash A:  (\bar{x},\bar{y})\in B \bigr\}\cup \Delta.
\end{eqnarray*}
We will prove that the definition of $B'$ does not depend on the class representative. It will suffice to show that given $\bar{u}\in [\bar{x}]_A$ such that for each $i\in\N$ we have $x_i\in B\bigl((g_k)_{j_i^k}(\varphi_k\circ \pi_k)^{-1}(p_k),r_{j_i^k}^k\bigr)$ for some $k\in \mathbb{N}$, implies that for each $i\in \N$ we also have $u_i\in B\bigl((g_k)_{j_i^k}(\varphi_k\circ \pi_k)^{-1}(p_k),r_{j_i^k}^k\bigr)$ for some $k\in \mathbb{N}$. For this purpose, fix an arbitrary $i\in \N$. Since $x_i\in G_i$, there exists $k\in\N$ such that $x_i\in G_{j_i^k}^k$. Then  $(u_i,x_i)\in r_{j_i^{k}}^k$ and $(u_{i+1},x_{i+1})\in r_{j_{i+1}^k}$. Also, by assumption, there exists $\bar{w}\in G_\infty$ such that $\pi_k(\bar{w})=\varphi_k^{-1}(p_k)$ and $(x_{i+2},w_{j_{i+2}^k})\in r_{j_{i+2}^k}^k$.  Thus $x_{i+2}\in B\bigl(x_{j_{i+2}^{k}}^{k},2r_{j_{i+2}^{k}}^k\bigr)$ and this implies that $\bigl(x_{i+1},x_{j_{i+1}^{k}}^k\bigr)\in r_{j_{i+1}^{k}}^k$. Hence, $u_{i+1}\in B\bigl(x_{j_{i+1}^{k}}^k,2r_{j_{i+1}^{k}}^k\bigr)$ and so, $(u_{i},x_{j_{i}^{k}}^k)\in r_{j_{i}^{k}}^k$. We get that $u_i\in B\bigl((g_k)_{j_i^k}(\varphi_k\circ \pi_k)^{-1}(p_k),r_{j_i^k}^k\bigr)$, as wanted. Thus $B'$ is well defined. Notice that $B\sp\prime$ is an equivalence relation.
       	      Consider the funtion $\varphi:(G_\infty/A)/B'\to G_\infty/R$  defined as $\varphi \bigl(\bigl[[\bar{x}]_A\bigr]_{B'}\bigr)=[\bar{x}]_R$. We will show tha $\varphi$ is well defined.

\begin{figure}[ht]
\centering
\begin{tikzcd}
         G\sb\infty \arrow[drr,"\pi_{A\cup B}" '] \arrow[r,"\pi_A"] & G\sb\infty\slash A \arrow[r,"\pi_{B\sp\prime}"] & (G\sb\infty\slash A)\slash B\sp\prime \arrow[d,"\varphi"] \\
         & & G\sb\infty \slash R
\end{tikzcd}
\caption{The spaces $G\sb\infty \slash R$ and $(G\sb\infty\slash A)\slash B$ are homeomorphic}
\label{fig:cd}
\end{figure}

         Let $[\bar{y}]_A \in \bigl[[\bar{x}]_A\bigr]_{B'}$. If $\bigl([\bar{x}]_A,[\bar{y}]_A\bigr)\in B'\backslash \Delta$, then  $(\bar{x},\bar{y})\in B\subset R$. If $[\bar{x}]_A=[\bar{y}]_A$, then $(x_i,y_i)\in A_i$ and thus $(\bar{x},\bar{y})\in A\subset R$.   
      It follows that the  diagram in Figure~\ref{fig:cd} commutes. Hence $\varphi$ is continuous.
      We now show that $\varphi$  is injective. Let $\bigl([\bar{x}]_A,[\bar{y}]_A \bigr)\notin B'$, then $(\bar{x},\bar{y})\notin A$, since otherwise $[\bar{x}]_A=[\bar{y}]_A$ and so $\bigl([\bar{x}]_A,[\bar{y}]_A \bigr)\in\Delta\subset B'$, a contradiction. But also $(\bar{x},\bar{y})\notin B$ since otherwise
      $(\bar{x},\bar{y})\in B$ implies by definition that $\bigl([\bar{x}]_A,[\bar{y}]_A \bigr)\in B'$, a contradiction. 
      We conclude that $[\bar{x}]_R\neq [\bar{y}]_R$ and thus $\varphi$ is injective. It is clear that $\varphi$ is surjective and its inverse $[\bar{x}]_R\mapsto \bigl[[\bar{x}]_A\bigr]_{B'}$ is also continuous by an analogous argument using a commutative diagram.  Thus $\varphi$ is a homeomorphism

\end{proof}

In fact the space $X$ in Theorem~\ref{wedge} is a normal space. In the next result we prove that more is true: there is a non-regular space admitting a g-cell structure.

First, we build the g-cell structure which will determine a non-regular space.

\begin{definition}\label{Ej_No_regular}
Consider the following sets with the discrete topology.
\begin{equation*}
G_{1}=\{a_{1}\}\cup \left\{b_{1}^{k}:k\in \N\right\}, \text{  }G_{2}=\{a_{2}\}\cup \left\{b_{2}^{k}:k\in \N\right\}\cup C_{1}^{2}\cup \left\{d_{2}^{k}:k\in \N\right\}
\end{equation*}
where $C_{1}^{2}=\left\{c^{1}_{2,1},c^{2}_{2,1}\right\}$ and let $L_{2}=1$. In general, for each $i>2$ let $L_{i}=L_{i-1}+i-1$, $C_{k}^{i}=\left\{c^{1}_{i,k},c^{2}_{i,k}\right\}$ with $1\leq k \leq L_{i}$ and
\begin{equation*}
G_{i}=\bigl\{a_{i}\bigr\}\cup \bigl\{b_{i}^{k}: k\in \N\bigr\} \cup \left(\bigcup_{k=1}^{L_{i}} C_{k}^{i}\right) \cup \bigl\{d_{i}^{k}:k\in \N\bigr\}.
\end{equation*}
Now, for each $i\in \N$,
\begin{multline*}
r_{i}=\bigl\{\bigl(a_{i},b_{i}^{k}\bigr),\bigl(b_{i}^{k},a_{i}\bigr):k\geq i\bigr\} \cup \bigl\{\bigl(b_{i}^{k},b_{i}^{n}\bigr):k,n\geq i\bigr\} \cup 
\bigl\{(a,b):a,b\in C_{k}^{i}, k=1,...,L_{i}\bigr\}\\ \cup \bigl \{\bigl(d_{i}^{(i-1)j+k},b_{i}^{k}\bigr),\bigl(b_{i}^{k},d_{i}^{(i-1)j+k}\bigr): k<i, j\in \N\cup\{0\}\bigr\},
\end{multline*}
$g_{i}^{i}:G_{i}\to G_{i}$ is the identity function on $G_{i}$ and $g_{i}^{i+1}:G_{i+1}\to G_{i}$ is defined as

\[
g_{i}^{i+1}(a)=  
\begin{cases}
		a_{i} &    \text{if }  a=a_{i+1}  \hfill (1) \\ 
		 b_{i}^{k} &   \mbox{if } a=b_{i+1}^{k}\hfill (2) \\
		 a_{i} &    \text{if } a=c^{1}_{i+1,1}\hfill  (3)\\
		 b_{i}^{i} &  \text{if } a=c^{2}_{i+1,1} \hfill  (4)\\
		c^{r}_{i,k-1} &  \text{if } a=c^{r}_{i+1,k}, r=1,2, k=2,...,L_{i}+1 \hfill (5)\\
		d_{i}^{k-(L_{i}+1)} &  \text{if } a=c^{1}_{i+1,k}, k=L_{i}+2,...,L_{i+1} \hfill  (6)\\
		 b_{i}^{k-(L_{i}+1)} &  \text{if } a=c^{2}_{i+1,k}, k=L_{i}+2,...,L_{i+1}\hfill  (7)\\
		 d_{i}^{(i-1)(j+1)+n} &  \text{if } a=d_{i+1}^{k}, k=i\cdot j+n \text{ for } 0<n< i, j\in \N \cup\{0\}\phantom{p} \hfill  (8)\\
		 a_{i} &  \text{if } a=d_{i+1}^{k}, k=i\cdot j, j\in \N \cup\{0 \} \hfill (9)
\end{cases}
\]
\end{definition}

First we will prove the following lemma.

\begin{lemma}  \label{existe_c}
The sequence $\left\{\bigl(G_{i},r_{i}\bigr), g_{i}^{i+1}\right\}_{i\in \N}$ given in Definition~\ref{Ej_No_regular} is an inverse sequence of cellular graphs. Furthermore, given $\bar{w}=(w_{1},w_{2},w_{3},...)\in G_{\infty}$ such that $w_{i}=d_{i}^{k}$ for some $k,i\in \N$, then there exists $j_{0}\in \N$ such that $w_{j_{0}}\in C_{s}^{j_{0}}$, for some $s\in \N$.
\end{lemma}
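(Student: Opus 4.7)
My plan is to address the two assertions of the lemma separately. For the claim that $\{(G_i, r_i), g_i^{i+1}\}_{i\in\N}$ is an inverse sequence of cellular graphs, the reflexivity of each $r_i$ is to be understood in the convention that $\Delta_{G_i}$ is included, and symmetry is immediate from the definition since every pair is listed together with its reverse. The identity requirement $g_i^i = \mathrm{id}$ and the composition rule $g_i^l = g_i^m \circ g_m^l$ come for free from defining $g_i^l$ by iteration. The substantive step is to verify that $(g_i^{i+1}(x), g_i^{i+1}(y)) \in r_i$ whenever $(x,y) \in r_{i+1}$, which I would handle by a case analysis over the four edge types of $r_{i+1}$ (the $(a_{i+1}, b_{i+1}^k)$ edges, the $(b_{i+1}^k, b_{i+1}^n)$ edges, the intra-$C_k^{i+1}$ edges, and the $(d_{i+1}^{ij+k}, b_{i+1}^k)$ edges) against the nine defining branches of $g_i^{i+1}$. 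Most subcases are immediate; the ones worth a second glance are the intra-$C_k^{i+1}$ edges, whose image is (a) the edge $(a_i, b_i^i) \in r_i$ when $k=1$ via branches (3)--(4), (b) an internal edge of $C_{k-1}^i$ when $2 \leq k \leq L_i+1$ via branch (5), and (c) a $d$-$b$ edge of $r_i$ when $L_i+2 \leq k \leq L_{i+1}$ via branches (6)--(7); and the $d$-$b$ edges, whose image is either a $d$-$b$ edge of $r_i$ with shift $j \mapsto j+1$ via branch (8) when $k < i$, or the edge $(a_i, b_i^i) \in r_i$ via branch (9) when $k = i$.

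For the second, more substantive assertion, I would start with the following rigidity observation extracted by inspection of cases (1)--(9): the only branches of $g_i^{i+1}$ with image of the form $d_i^m$ are branch (6), which produces $d_i^m$ exactly for $m \in \{1,\ldots,i-1\}$ (and only from $c_{i+1,m+L_i+1}^1$), and branch (8), which produces $d_i^m$ exactly for $m \geq i$ (and only from a unique $d_{i+1}^{k'}$). Hence, if $w_i = d_i^k$ with $k < i$, then $w_{i+1}$ is forced to equal $c_{i+1,k+L_i+1}^1 \in C_{k+L_i+1}^{i+1}$, and we are finished with $j_0 = i+1$. If instead $k \geq i$, then $w_{i+1} = d_{i+1}^{k'}$ is the unique preimage from branch (8), where from $k = (i-1)(j+1) + n$ with $1 \leq n \leq i-1$ one gets $k' = ij + n = k + j - i + 1$, and the recursion continues among $d$-type coordinates.

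The main obstacle is to show this $d$-type recursion terminates. Writing $i_0 = i$, $k_0 = k$, and iterating $i_{m+1} = i_m + 1$, $k_{m+1} = k_m + j_m - i_m + 1$, I would introduce the Lyapunov quantity $q_m := \lfloor (k_m - i_m)/(i_m - 1) \rfloor$, which a short computation identifies with the $j_m$ of branch (8). From the defining bound $k_m - i_m \leq (q_m + 1)(i_m - 1) - 1$ one deduces $k_{m+1} - i_{m+1} = (k_m - i_m) + q_m - i_m \leq q_m i_m - 2$, and dividing by $i_{m+1} - 1 = i_m$ and taking floors yields the strict descent $q_{m+1} \leq q_m - 1$. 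Since $q_m$ is a non-negative integer whenever $k_m \geq i_m$, this forces $k_m < i_m$ within at most $q_0 + 1$ steps, at which point the dichotomy above guarantees $w_{i_m+1} \in C_s^{i_m+1}$ for some $s$, producing the required $j_0$.
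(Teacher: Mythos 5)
Your proposal is correct and follows essentially the same route as the paper: the same case analysis over the edge types of $r_{i+1}$ against the nine branches of $g_i^{i+1}$, and the same key dichotomy that a coordinate $d_i^k$ can only be the image of the branch-(6) cell $c^1_{i+1,k+L_i+1}$ when $k<i$, or of the unique branch-(8) cell $d_{i+1}^{ij+n}$ when $k\geq i$. The only cosmetic difference is how termination of the $d$-type recursion is packaged: the paper tracks the exact orbit $w_{i+t}=d_{i+t}^{(i+t-1)(j-t+1)+n}$, so the branch-(8) parameter drops by exactly one and the process ends at $t=j+1$, whereas you prove the same drop as a strict descent of $\bigl\lfloor (k_m-i_m)/(i_m-1)\bigr\rfloor$; both computations check out.
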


\begin{proof}
By definition, $r_{i}$ is a reflexive and symmetric relation for each $i\in \mathbb{N}$. Also, by definition of the bonding functions, we get that  $\{(G_{i},r_{i}), g_{i}^{i+1}\}_{i\in \mathbb{N}}$ satisfies the first two conditions of inverse system.
We will show that the third condition is satistied. Let $a,b\in G_{i+1}$ such that $(a,b)\in r_{i+1}$.

\begin{itemize}
	\item If $a=a_{i+1}$ and $b=b_{i+1}^{k}$ with $k\geq i$, then $g_{i}^{i+1}(a)=a_{i}$ and $g_{i}^{i+1}(b)=b_{i}^{k}$. Hence, $(g_{i}^{i+1}(a),g_{i}^{i+1}(b))\in r_{i}$.
	\item If $a=b_{i+1}^{k}$ and $b=b_{i+1}^{n}$, then $g_{i}^{i+1}(b_{i+1}^{k})=b_{i}^{k}$ and $g_{i}^{i+1}(b_{i+1}^{n})=b_{i}^{n}$. Therefore, $(g_{i}^{i+1}(a),g_{i}^{i+1}(b))\in r_{i}$.
	\item If $a,b\in C_{k}^{i+1}$ we can suppose that  $a=c^{1}_{i+1,k}$ and $b=c^{2}_{i+1,k}$. If $k=1$, then $g_{i}^{i+1}(a)=a_{i}$ and $g_{i}^{i+1}(b)=b_{i}^{i}$. Thus, $(g_{i}^{i+1}(a),g_{i}^{i+1}(b))\in r_{i}$ .
	Now, if $k\in \{2,...,L_{i}+1\}$, we get that $g_{i}^{i+1}(a)=c^{1}_{i,k-1}$ and $g_{i}^{i+1}(b)=c^{2}_{i,k-1}$. Hence, $(g_{i}^{i+1}(a),g_{i}^{i+1}(b))\in r_{i}$. By last, if $k\in \{L_{i}+2,...,L_{i+1}\}$, then $g_{i}^{i+1}(a)=d_{i}^{k-(L_{i}+1)}$ and $g_{i}^{i+1}(b)=b_{i}^{k-(L_{i}+1)}$. Therefore $(g_{i}^{i+1}(a),g_{i}^{i+1}(b))\in r_{i}$.
	\item Suppose that $b=b_{i+1}^{k}, \text{ with } k<i+1, a=d_{i+1}^{i\cdot j+k} \text{ with } j\in \mathbb{N}\cup\{0\}$. If $i\cdot j+k=i\cdot r$ for some $r\in \mathbb{N}$, then $g_{i}^{i+1}(a)=a_{i}$ and $g_{i}^{i+1}(b)=b_{i}^{k}$. So, we get that $(g_{i}^{i+1}(a),g_{i}^{i+1}(b))\in r_{i}$. Now, if $i\cdot j+k=i\cdot r+n$ with $0<n<i$, then $g_{i}^{i+1}(a)=d_{i}^{(i-1)(j+1)+n}$ and $g_{i}^{i+1}(b)=b_{i}^{k}$. Since that $n<i$, we conclude that $(g_{i}^{i+1}(a),g_{i}^{i+1}(b))\in r_{i}$.
\end{itemize}

Consequently, we get that $\{(G_{i},r_{i}), g_{i}^{i+1}\}$ is an inverse system of cellular graphs.
Let $k\leq i-1$, notice that $w_{i+1}=c^{1}_{i+1,k+L_{i}+1}$, then we can take $j_{0}=i+1$ and $s=k+L_{j_{0}-1}+1$ and we get that $w_{j}\in C_{k+L_{j_{0}-1}+1}^{j}$. Now, we suppose that $k>i-1$. Then we can factorize $k$ as $k=(i-1)(j+1)+n$ for some $j\in \N\cup \{0\}$ and $0<n\leq i-1$. By definition of the function $g_{i}^{i+1}$ we get that $w_{i+1}=d_{i+1}^{i\cdot j+n}$. Now, we consider the preimage of $w_{i+1}$ under the function $g_{i+1}^{i+2}$. We have two cases again. If $i\cdot j+n\leq i$, we get that $w_{i+2}=c^{1}_{i+2,i\cdot j+n+L_{i+1}+1}$. Hence, we can take $j_{0}=i+2$ and $s=i\cdot j+n+L_{j_{0}-1}+1$. This implies that $w_{j_{0}}\in C_{i\cdot j+n+L_{j_{0}-1}+1}^{j_{0}}$. If $i\cdot j+n> i$, then $w_{i+2}=d_{i+2}^{(i+1)(j-1)+n}$. In general, if $w_{i+t}$ is not in some $C_{k}^{i}$, then $w_{i+t}=d_{i+t}^{(i+t-1)(j-t+1)+n}$. Taking $m=j+1$, we have $w_{i+m}=d_{i+m}^{n}$. Since $0<n<i-1\leq i+m-1$,  we get that $w_{i+m+1}=c^{i}_{i+m+1,n+L_{i+m}+1}$. Now, we can take $j_{0}=i+m+1$ and $s=n+L_{j_{0}-1}+1$ and we get that $w_{j_{0}}\in C_{n+L_{j_{0}-1}+1}^{j_{0}}$. Consequently, for each $\bar{w}\in G_{\infty}$ such that $w_{i}=d_{i}^{k}$ with $k,i\in \N$, there exists $j\in \N$ such that $w_{j}\in C_{s}^{j}$, for some $s\in \N$.
\end{proof}

We will also need the following.
\begin{lemma}\label{lemma:gcell}
The sequence $\left\{\bigl(G_{i},r_{i}\bigr), g_{i}^{i+1}\right\}_{i\in \N}$ given in Definition~\ref{Ej_No_regular} is a g-cell structure.
\end{lemma}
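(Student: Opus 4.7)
The plan is to verify condition~(2) of Proposition~\ref{Prop:implyGcell}: for every thread $\bar{x}\in G_\infty$ and every $i\in\N$, to exhibit $j\geq i$ with $g_i^j\bigl(B(x_j,2r_j)\bigr)\subset B(x_i,r_i)$. I would first classify every thread into one of three mutually exclusive types: (A) the constant $a$-thread $(a_1,a_2,\ldots)$; (B) a constant $b^k$-thread $(b_1^k,b_2^k,\ldots)$ for some fixed $k\in\N$; or (C) a $c$-type thread, meaning there is a smallest $j_0\in\N$ together with $s_0\in\{1,\ldots,L_{j_0}\}$ and $r\in\{1,2\}$ such that $x_{j_0}=c^r_{j_0,s_0}$, in which case $x_j=c^r_{j,\,s_0+j-j_0}$ for every $j\geq j_0$ (by iterating case~(5) of the bonding maps, which is the only preimage case producing a $c$-cell in $G_{j_0}$). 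Lemma~\ref{existe_c} guarantees that any thread which ever visits a $d$-cell is of type (C); inspecting the preimages of $a_i$ and $b_i^k$ in cases (1)--(9) shows that the only preimages inside $\{a_{i+1}\}\cup\{b_{i+1}^m:m\in\N\}$ are $a_{i+1}$ and $b_{i+1}^k$ respectively, so any thread that avoids both $c$- and $d$-cells must be of type (A) or (B).

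For types (A) and (B), I would prove $B(x_i,2r_i)=B(x_i,r_i)$, so that $j=i$ settles condition~(2). Indeed, for $x_i=a_i$, the definition of $r_i$ gives $B(a_i,r_i)=\{a_i\}\cup\{b_i^k:k\geq i\}$, and each cell in this neighborhood has exactly the same $r_i$-neighborhood, so $B(a_i,2r_i)=B(a_i,r_i)$. For $x_i=b_i^k$, the same computation works when $k\geq i$; when $k<i$ the neighborhood is $B(b_i^k,r_i)=\{b_i^k\}\cup\{d_i^{(i-1)j+k}:j\geq 0\}$, and each $d$-cell appearing here is related only to itself and to $b_i^k$, again giving $B(b_i^k,2r_i)=B(b_i^k,r_i)$.

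For a type (C) thread, if $i\geq j_0$ then $x_i=c^r_{i,s}$ with $s=s_0+i-j_0$; one checks $B(x_i,r_i)=C^i_s$ and that the partner cell $c^{r'}_{i,s}$ has the same $r_i$-neighborhood, hence $B(x_i,2r_i)=C^i_s=B(x_i,r_i)$, so $j=i$ works. If instead $i<j_0$, I take $j=j_0$. Then $B(x_{j_0},2r_{j_0})=C^{j_0}_{s_0}$ is the two-element set $\{c^1_{j_0,s_0},c^2_{j_0,s_0}\}$, and since the composition $g_i^{j_0}$ of bonding maps sends edges to edges, the pair $\bigl(g_i^{j_0}(c^1_{j_0,s_0}),\,g_i^{j_0}(c^2_{j_0,s_0})\bigr)$ lies in $r_i$. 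One of these images equals $x_i$ by definition of a thread, and the other is therefore in $B(x_i,r_i)$, yielding $g_i^{j_0}\bigl(C^{j_0}_{s_0}\bigr)\subset B(x_i,r_i)$, as required.

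The main obstacle is carrying out the classification in the first paragraph rigorously: one must check, case by case through (1)--(9), that the preimage of an $a$- or $b$-cell outside $\{a,\,b\}$-cells is always a $c$- or $d$-cell, and invoke Lemma~\ref{existe_c} to eliminate ``eventually $d$-type'' threads. Once the three-type classification is in hand, the remainder of the argument reduces either to the local transitivity of $r_i$ around $a_i$ or $b_i^k$, or to the two-element clusters $C^{j_0}_{s_0}$ where edge-preservation of the bonding maps does the work.
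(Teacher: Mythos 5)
Your proposal is correct and follows essentially the same route as the paper: verify condition~(2) of Proposition~\ref{Prop:implyGcell} by showing $B(x_i,2r_i)=B(x_i,r_i)$ whenever $x_i$ is an $a$-, $b$- or $c$-cell (so $j=i$ works), and by invoking Lemma~\ref{existe_c} to jump to a later coordinate lying in some two-element cluster $C^{j_0}_{s}$ when a $d$-cell appears, concluding via edge-preservation of the bonding maps. Your preliminary classification of threads into types (A), (B), (C) is extra organizational scaffolding (and makes explicit the edge-preservation step the paper leaves implicit), but the underlying argument is the same.
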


\begin{proof}
By Lemma~\ref{existe_c}, the system given in Definition~\ref{Ej_No_regular} is an inverse system.
It remains to prove that the  condition for a g-cell structure is satisfied.
Let $\bar{w}=(w_{1},w_{2},w_{3},...)\in G_{\infty}$ and $i\in \N$. 
If $w_{i}=a_{i}$ or $w_{i}=b_{i}^{k}$ with $k\geq i$, then $B(w_{i},r_{i})=\{a_{i}\}\cup \{b_{i}^{k}:k\geq i\}$ and $B(w_{i},2r_{i})=\{a_{i}\}\cup \{b_{i}^{k}:k\geq i\}$. Now, if $w_{i}=b_{i}^{k}$, with $k<i$, we get that $B(w_{i},r_{i})=\{b_{i}^{k}\}\cup \{d_{i}^{(i-1)j+k}:j\in \N\cup \{0\}\}$ and $B(w_{i},2r_{i})=\{b_{i}^{k}\}\cup \{d_{i}^{(i-1)j+k}:j\in \N\cup \{0\}\}$. If $w_{i}\in C_{k}^{i}$ for some $k\in \N$, then $B(w_{i},r_{i})=C_{k}^{i}$ and $B(w_{i},2r_{i})=C_{k}^{i}$. Hence, if $w_{i}\in \{a_{i}\}\cup \{b_{i}^{k}: k\in \N\}\cup \left(\bigcup_{k=1}^{L_{i}} C_{k}^{i}\right)$, then $B(w_{i},r_{i})=B(w_{i},2r_{i})$. Finally, if $\bar{w}\in G_{\infty}$ such that $w_{i}=d_{i}^{k}$ for some $k,i\in \N$. Then, by the Lemma \ref{existe_c}, there exists $j_{0}\in \N$ such that $w_{j_{0}}\in C_{s}^{j_{0}}$ for some $s\in \N$. Therefore, we have that $B(w_{j_{0}},r_{j_{0}})=B(w_{j_{0}},2r_{j_{0}})$. We conclude that for each $i\in \N$ and $\bar{w}\in G_{\infty}$, there exists $j\in \N$ such that $g_{i}^{j}(B(w_{j},2r_{j}))\subset B(w_{i},r_{i})$. Consequently, $\{(G_{i},r_{i}), g_{i}^{i+1}\}_{i\in \N}$ is a g-cell structure.
\end{proof}

We are now ready to prove our last result.
\begin{theorem}
 There are spaces admitting a g-cell structure which are not regular.
\label{No_regular}
\end{theorem}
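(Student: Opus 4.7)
The plan is to use the g-cell structure of Definition~\ref{Ej_No_regular} and show that its quotient $G^\ast$ fails regularity by exhibiting a point $p$ and a disjoint closed set $F$ that cannot be separated by disjoint open neighborhoods. I will take $p=\pi(\bar a)$ with $\bar a=(a_i)_{i\in\N}$ and
\[F=\{\pi(\bar b^{(k)}):k\in\N\},\qquad \bar b^{(k)}=(b_i^k)_{i\in\N}.\]

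The preparatory observation is that each $r_i$ is itself an equivalence relation whose classes are the explicit cliques
\[A_i=\{a_i\}\cup\{b_i^m:m\geq i\},\qquad D_{i,k}=\{b_i^k\}\cup\{d_i^{(i-1)j+k}:j\geq 0\}\ (k<i),\qquad C_m^i=\{c^1_{i,m},c^2_{i,m}\},\]
and then a bonding-map chase shows that both $[\bar a]=\{\bar a\}$ and $[\bar b^{(k)}]=\{\bar b^{(k)}\}$ are singletons in $G_\infty$: the only $A_{m+1}$-preimage of $b_m^m$ is empty, and any thread that starts to wander into the $d$-elements of a $D$-class is forced out of $D$ into a $C$-class at a higher level, so no thread other than $\bar a$ (resp.\ $\bar b^{(k)}$) realises the corresponding class sequence. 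Consequently $\pi^{-1}(F)=\{\bar b^{(k)}:k\in\N\}$. For any $\bar y$ in its closure, each basic neighborhood $g_j^{-1}(y_j)$ must meet $\pi^{-1}(F)$, which (since $G_j$ is discrete) forces $y_j=b_j^{k(j)}$ for some $k(j)\in\N$ at every $j$; the bonding identity $g_{j-1}^j(b_j^{k(j)})=b_{j-1}^{k(j)}$ then forces $k(j)$ to be constant in $j$, so $\bar y=\bar b^{(k)}\in\pi^{-1}(F)$. Hence $F$ is closed in $G^\ast$, and $[\bar a]\notin F$ because $(a_{k+1},b_{k+1}^k)\notin r_{k+1}$ for every $k$.

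The central step is to show that every open $V\ni p$ and every open $W\supset F$ intersect. Discreteness of $G_i$ and openness of $\pi^{-1}(V)$ yield $i\in\N$ with $g_i^{-1}(a_i)\subset\pi^{-1}(V)$, while openness of $\pi^{-1}(W)$ at each $\bar b^{(k)}$ yields $j_k\in\N$ with $g_{j_k}^{-1}(b_{j_k}^k)\subset\pi^{-1}(W)$. Fix any $k\geq i$ and set $t=\max(j_k+1,k+2)$, so that $t\geq k+2$. I will construct a two-element equivalence class $[\tilde z]=\{\tilde z,\tilde z'\}$ with representatives
\begin{align*}
\tilde z &= \bigl(b_1^k,\ldots,b_{t-1}^k,\,c^2_{t,\,k+L_{t-1}+1},\,c^2_{t+1,\,k+L_{t-1}+2},\ldots\bigr),\\
\tilde z' &= \bigl(a_1,\ldots,a_k,\,d_{k+1}^{k(t-k-1)},\ldots,d_{t-1}^{k},\,c^1_{t,\,k+L_{t-1}+1},\,c^1_{t+1,\,k+L_{t-1}+2},\ldots\bigr),
\end{align*}
where $\tilde z$ tracks the $c^2$-branch and $\tilde z'$ the $c^1$-branch of each $C_m^i$, and where the intermediate $d$-entries of $\tilde z'$ are given by the closed form $y_i=d_i^{(t-1-i)(i-1)+k}$ for $k+1\leq i\leq t-1$. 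Granting this, $\tilde z'_i=a_i$ and $k\geq i$ give $\tilde z'\in g_i^{-1}(a_i)\subset\pi^{-1}(V)$, so by saturation $[\tilde z]\subset\pi^{-1}(V)$; similarly $\tilde z_{j_k}=b_{j_k}^k$ (since $j_k\leq t-1$) gives $\tilde z\in g_{j_k}^{-1}(b_{j_k}^k)\subset\pi^{-1}(W)$ and therefore $[\tilde z]\subset\pi^{-1}(W)$. Hence $\pi(\tilde z)\in V\cap W$, and the theorem follows.

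The main obstacle is the combinatorial verification of the descent for $\tilde z'$: I must show that rule~(8) of Definition~\ref{Ej_No_regular} applies at every intermediate level $i\in\{k+2,\ldots,t-1\}$, using that the residue of $(t-1-i)(i-1)+k$ modulo $i-1$ equals $k$ and that $i-1>k$ prevents $i-1$ from dividing $k$, so the exponent is never a multiple of $i-1$ in that range; and that the descent terminates exactly at the pass $k+1\to k$ because there the exponent $k(t-k-1)$ is a multiple of $k$, so rule~(9) yields $y_k=a_k$ and hence $y_j=a_j$ for all $j\leq k$. The bonding identities and the equivalence $\tilde z\sim\tilde z'$ are then immediate level-by-level from the cliques $A_i$, $D_{i,k}$, $C_m^i$, and the topological conclusion comes for free from saturation of $\pi^{-1}(V)$ and $\pi^{-1}(W)$.
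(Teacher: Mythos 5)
Your proof is correct and follows essentially the same route as the paper: the same g-cell structure of Definition~\ref{Ej_No_regular}, the same point $\pi(\bar a)$ and closed set $\pi(A)$ (you get closedness by showing $A$ is closed and saturated via singleton classes, while the paper shows directly that the preimage of the complement is open — same tools either way), and the same non-separation device, namely two $r$-related threads running along the $c^1$- and $c^2$-branches of the $C$-cliques, one projecting down to the $b_i^k$'s and the other down through the $d$'s to the $a_j$'s, exactly as the paper's $\bar c$ and $\bar d$. One inaccuracy to remove: your preparatory claim that each $r_i$ is an equivalence relation is false, since distinct elements $d_i^{(i-1)j+k}$ attached to the same $b_i^k$ are not related to one another, so $r_i$ is not transitive; however, nothing in your argument actually uses transitivity of $r_i$ (only explicit membership of pairs in $r_i$ and the sets $B(x,r_i)$), so the proof stands once that remark is deleted or rephrased in terms of $B(x,r_i)$.
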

\begin{proof}
Consider the inverse sequence of cell graphs
$\left\{\bigl(G_{i},r_{i}\bigr), g_{i}^{i+1}\right\}_{i\in \N}$ given in De\-fi\-ni\-tion~\ref{Ej_No_regular}. By  Lemma~\ref{lemma:gcell} it is a g-cell structure. Let  $\bar{a}=(a_{1},a_{2},a_{3},...)$, $A=\left\{(b_{1}^{k},b_{2}^{k},b_{3}^{k},...)\colon k\in \N\right\}$ and $B=\pi(A)$. We will show that $B$ is a closed set in $G^{*}$ and it does not contain $\pi(\bar{a})$. Let $b_{k}=(b_{1}^{k},b_{2}^{k},b_{3}^{k},...)\in A$ for some $k\in \N$. By definition of the relation $r_{k+1}$,  the element $a_{k+1}$ is related with $b_{k+1}^{t}$ if and only if $t\geq k+1$. Hence, $(b_{k+1}^{k},a_{k+1})\notin r_{k+1}$ this implies that $(b_{k},\bar{a})\notin r$. Since $k$ was chosen arbitrarily, $\bar{a}$ is not related with some element of $A$. Therefore, $\pi(\bar{a})\notin B$.
	
Let $\bar{x}=(x_{1},x_{2},x_{3},...) \in \pi^{-1}(G^{*}\backslash B)$. We shall show that $\pi^{-1}(G^{*}\backslash B)\subset G_{\infty}\backslash A$. Notice that if $\bar{b}\in \pi^{-1}(G^{*}\backslash B)$, then $\pi(\bar{b})\in G^{*}\backslash B$ and thus $\pi(\bar{b})\notin B=\pi(A)$. It result that $\bar{b}\notin A$ and this implies that $\bar{b}\in G_{\infty}\backslash A$. Therefore, $\bar{x}\notin A$. Now, we will show that there exists $k\in \N$ such that for every $n\in \N$, $x_{k}\neq b_{k}^{n}$. Suppose that for every $k\in \N$ there exists $n\in \N$ such that $x_{k}=b_{k}^{n}$. In particular for $k=1$, there exists $n_{1}$ such that $x_{1}=b_{1}^{n_{1}}$. Since that $x_{1}=g_{1}^{2}(x_{2})$ and for $k=2$ there exists $n_{2}$ such that $x_{2}=b_{2}^{n_{2}}$, then $x_{2}=b_{2}^{n_{1}}$. Applying induction, we suppose that $x_{t}=b_{t}^{n_{1}}$ for some $t\in \N$. Since $x_{t}=g_{t}^{t+1}(x_{t+1})$ and for $k=t+1$ there exists $n_{t+1}$ such that $x_{t+1}=b_{t+1}^{n_{t+1}}$, we get that $x_{t+1}=b_{t+1}^{n_{1}}$. Then $x_{t}=b_{t}^{n_{1}}$ for every $t\in \N$. Hence, $\bar{x}\in A$ which is a contradiction. So, there exists $k\in \N$ such that for each $n\in \N$, $x_{k}\neq b_{k}^{n}$.
	
We shall show that $g_{k}^{-1}(x_{k})$ is an open set and $\bar{x}\in g_{k}^{-1}(x_{k})\subset \pi^{-1}(G^{*}\backslash B)$. Let $\bar{w}=(w_{1},w_{2},w_{3},...)\in g_{k}^{-1}(x_{k})$, then $w_{k}=x_{k}$. Now, if $\bar{w}=\bar{a}$, then $\bar{w}\in \pi^{-1}(G^{*}\backslash B)$ because $\pi(\bar{a})\in G^{*}\backslash B$. Suppose that $\bar{w}\neq \bar{a}$, then $w_{k}=d_{k}^{n}$ or $w_{k}=c_{k,m}^{r}$ with $n,m\in \N$ and $r\in \{1,2\}$. First we will consider the case which $\bar{w}$ satisfies $w_{k}=d_{k}^{n}$ for some $n\in \N$. By the Lemma \ref{existe_c}, there exists $l>k$ such that $w_{l}=c_{l,t}^{r}$ for some $t\in \N$ and $r\in \{1,2\}$. Therefore, by definition of $r_{l}$, $(w_{l},b_{l}^{s})=(c_{l,t}^{r},b_{l}^{s})\notin r_{l}$ for every $s\in \N$. Then $(\bar{w},b_{s})\notin r$ for each $b_{s}\in A$ this implies that $\pi(\bar{w})\notin B$. Hence, $\bar{w}\in \pi^{-1}(G^{*}\backslash B)$. Now, we will consider $\bar{w}$ such that $w_{k}=c_{k,m}^{r}$ for some $m\in \N$ and $r\in \{1,2\}$. Then $(c_{k,m}^{r},b_{k}^{s})\notin r_{l}$ for every $s\in \N$. Similarly to the first case, we get that $\bar{w}\in \pi^{-1}(G^{*}\backslash B)$. We conclude that $g_{k}^{-1}(x_{k})$ is an open set such that $\bar{x}\in g_{k}^{-1}(x_{k})\subset \pi^{-1}(G^{*}\backslash B)$. 
Consequently $\pi^{-1}(G^{*}\backslash B)$ is open this implies that $G^{*}\backslash B$ is open in $G^{*}$. Therefore, $B$ is closed in $G^{*}$.
	
To show that $G^{*}$ is not regular, we will prove that there are not disjoint open sets $U$ and $V$ in $G^{*}$ such that $\pi(\bar{a})\in U$ and $B\subset V$. Let $U$, $V$ open sets in $G^{*}$ such that $\pi(\bar{a})\in U$ and $B\subset V$. Then $\pi^{-1}(U)$ and $\pi^{-1}(V)$ are open sets in $G_{\infty}$ which satisfy $\pi^{-1}(\pi(\bar{a}))\subset \pi^{-1}(U)$ and $\pi^{-1}(B)\subset \pi^{-1}(V)$. Also, $\pi^{-1}(\pi(\bar{a}))=\{\bar{a}\}$. Since $\bar{a}\in \pi^{-1}(U)$ and $\pi^{-1}(U)$ is open, there exists a basic open $g_{j}^{-1}(u_{j})$ such that $\bar{a}\in g_{j}^{-1}(u_{j})\subset \pi^{-1}(U)$. This implies that $a_{j}=u_{j}$ and thus $g_{j}^{-1}(a_{j})\subset \pi^{-1}(U)$. By definition of $r_{j}$, we have that $(a_{j},b_{j}^{j})\in r_{j}$. Consider the thread $b_{j}=(b_{1}^{j},b_{2}^{j},b_{3}^{j},..., b_{j-1}^{j},b_{j}^{j},...)$. Then $b_{j}\in A$ and thus, $b_{j}\in \pi^{-1}(B)\subset \pi^{-1}(V)$. Since $\pi^{-1}(V)$ is open, for $b_{j}$ there exists a basic open $g_{i}^{-1}(b_{i}^{j})$ such that $b_{j}\in g_{i}^{-1}(b_{i}^{j})\subset \pi^{-1}(V)$. If $\bar{z}\in g_{i+1}^{-1}(b_{i+1}^{j})$, then $z_{i+1}=b_{i+1}^{j}$ this implies that $z_{i}=g_{i}^{i+1}(b_{i+1}^{j})=b_{i}^{j}$. Hence, $\bar{z}\in g_{i}^{-1}(b_{i}^{j})$ and $g_{i+1}^{-1}(b_{i+1}^{j})\subset g_{i}^{-1}(b_{i}^{j})$. Without losing generality, we suppose that $i>j$. Remember that $L_{i+1}=L_{i}+i$, then $L_{i}+2\leq j+L_{i}+1\leq L_{i+1}$. Applying the row 7 of the bonding function $g_{i}^{i+1}$, we get that $g_{i}^{i+1}(c^{2}_{i+1,j+(L_{i}+1)})=b_{i}^{j}$. Now, we consider the thread 
\begin{equation*}
	\bar{c}=\bigl(b_{1}^{j},b_{2}^{j},b_{3}^{j},..., b_{j}^{j},...,b_{i}^{j}, c^{2}_{i+1,j+(L_{i}+1)},c^{2}_{i+2,j+(L_{i}+1)+1},c^{2}_{i+3,j+(L_{i}+1)+2},...\bigr).
\end{equation*}
Then $\bar{c}\in g_{i}^{-1}(b_{i}^{j})$ and thus, $\bar{c}\in \pi^{-1}(V)$. Define the thread $\bar{d}$ as 
\begin{equation*}
	\bar{d}=\bigl(a_{1},...,g_{i-1}^{i+1}(c^{1}_{i+1,j+(L_{i}+1)}),g_{i}^{i+1}(c^{1}_{i+1,j+(L_{i}+1)}), c^{1}_{i+1,j+(L_{i}+1)},c^{1}_{i+2,j+(L_{i}+1)+1},...\bigr).
\end{equation*}
By definition of $r_{i+k}$ with $k\in \N$, we have that $\bigl(c^{2}_{i+k,j+(L_{i}+k)},c^{1}_{i+k,j+(L_{i}+k)}\bigr)\in r_{i+k}$. Then $\bigl(\bar{d},\bar{c}\bigr)\in r$ and therefore $\bar{d}\in \pi^{-1}(V)$. Since $i>j$, we get that $L_{i}+1\leq j+L_{i}+1\leq L_{i}+i=L_{i+1}$ and applying the row 6 of the bonding function $g_{i}^{i+1}$, it result that $g_{i}^{i+1}\bigl(c^{1}_{i+1,j+(L_{i}+1)}\bigr)=d_{i}^{j}$. If $j=i-1$, by row 9 of the function $g_{i-1}^{i}$, we have that $g_{i-1}^{i}\bigl(d_{i}^{j}\bigr)=a_{i-1}=a_{j}$. In other case ($j<i-1$), applying the row 8 of the function $g_{i-1}^{i}$, $g_{i-1}^{i}\bigl(d_{i}^{j}\bigr)=d_{i-1}^{i-2+j}$. Considering now the function $g_{i-2}^{i-1}$, we have two cases again. If $j=i-2$, then $g_{i-2}^{i-1}\bigl(d_{i-1}^{i-2+j}\bigr)=a_{i-2}=a_{j}$ and in the case $j<i-2$, $g_{i-2}^{i-1}\bigl(d_{i-1}^{i-2+j}\bigr)=d_{i-2}^{i-3+j}$. Since $i>j>1$, there exists $0<l<i$ such that $j=i-l$. If we continue with this process until $j=i-l$, we will get that $g_{i-l}^{i-l+1}\bigl(d_{i-l+1}^{i-l+j}\bigr)=a_{i-l}=a_{j}$. Therefore $g_{j}^{i}\bigl(d_{i}^{j}\bigr)=a_{j}$. Consequently, $\bar{d}\in g_{j}^{-1}(a_{j})$ and thus $\bar{d}\in \pi^{-1}(U)$. So, we get that $\pi^{-1}(U)\cap \pi^{-1}(V)\neq \emptyset$. This mean that there exists $\bar{z}\in G_{\infty}$ such that $\pi(\bar{z})\in U$ y $\pi(\bar{z})\in V$. Hence, $\pi(\bar{z})\in U\cap V$and this implies that $U\cap V\neq \emptyset$. Therefore, if $U$ and $V$ are open sets in $G^{*}$ such that $\pi(\bar{a})\in U$ and $B\subset V$, then $U\cap V\neq \emptyset$. We conclude that $G^{*}$ is not regular. 
\end{proof}

Evidently, there is a net version of Definition~\ref{GCellStruc}, as was observed in \cite{DT2018} for cell structures. Using this generalized definition one may check that uniform spaces $(X,\mathcal{U})$  have a  nontrivial g-cell structure $\{(G_u,u)\}_{u\in\mathcal{U}}$ where $G_u=X$ and $g_u^u=I_X$ for all $u,v\in\mathcal{U}$. It would be interesting to determine what is the class of topological spaces admitting a g-cell structure.

\bibliography{mybib}{}
\bibliographystyle{plain}
\end{document}